\newtheorem{theorem}{Theorem}[section]
\newtheorem{lemma}[theorem]{Lemma}
\newtheorem{corollary}[theorem]{Corollary}
\theoremstyle{definition}
\newtheorem{definition}[theorem]{Definition}
\theoremstyle{remark}
\newtheorem{proposition}[theorem]{Proposition}
\newtheorem{fact}[theorem]{Fact}
\numberwithin{equation}{section}
\begin{document}

% \title[short text for running head]{full title}
\title[Weighted estimates of commutators     for $0<p<\infty$]{Weighted estimates of commutators     for $0<p<\infty$}

%    Only \author and \address are required; other information is
%    optional.  Remove any unused author tags.

%    author one information
% \author[short version for running head]{name for top of paper}
\author{Shunchao Long}
\address{School of Mathematics and Computational Science\\ Xiangtan University\\ Xiangtan 411105 China}
\curraddr{}
\email{sclongc@126.com}
\thanks{Key words and phrases: commutator, sublinear operator, maximal operator, BMO,  blocks space, weighted Hardy space, weighted estimate,
full exponent estimate, singular integral, Carleson operator,  polynomial Carleson operator, oscillatory singular integral,  pseudo-differential operator}

%    author two information
%\author{}
%\address{}
%\curraddr{}
%\email{}
%\thanks{}

%    \subjclass is required.
\subjclass[2020]{Primary 47B47; Secondary 42B20, 42B25, 42B30, 42B35.}

\date{}

\dedicatory{}

%    Abstract is required.
\begin{abstract}
We establish weighted inequalities for $BMO$ commutators of  sublinear operators for all $0<p<\infty$.
\par For   weights $w$  satisfying the doubling condition of order $q$ with $0<q<p$ and the reverse H\"{o}lder condition, we prove that

\par  $\bullet$
    commutators $T_b$, which are bounded on  $L^p$  with  $1<p<\infty$, are bounded from some subspaces of $L^p_w$ to $L^p_w$  and to themselves for all $0<p<\infty$, these are applied to the commutators of singular integral operators and Hardy-Littelwood maximal operator, et.al, which are known to fail to be bounded from $H^1$ to $L^1$ and whose estimate has been open problems for $0<p$ enough small;

\par  $\bullet$
   commutators $T_b$, whose associated  operators $T$ are bounded on  $L^p$  with  $1<p<\infty$, are bounded from some subspaces of $L^p_w$ to $L^p_w$  and from some subspaces of $L^p_w$ to others  for all $0<p<\infty$, these are applied to the commutators of maximal operators such as singular integral maximal operators, Carleson  operator and the polynomial Carleson operator, et.al, the estimate of these commutators has been open problems for each $0<p<\infty$;

\par  $\bullet$
in particular,  these imply that the  commutators  above are bounded  from  $H^p_w$ to $L^p_w$  and to itself for all $0<p\leq 1$.

\end{abstract}

\maketitle

%    Text of article.
\par \section
 { Introduction }
Given a operator $T$ acting on functions and given
a function $b$, we define formally  the  commutator $T_b$ as
$$ T_bf(x)=T((b(x)-b(\cdot))f(\cdot))(x).$$
\par
A classical result of Coifman et al \cite{CRW} states that the commutator $T_b$
is bounded on $L^p$ for $ 1<p <\infty $,
 when $T$ is   a classical singular integral operator and $b \in  BMO$.
 This operator is more singular than the associated singular integral operator since  it fails, in
general, to be of both  weak type (1, 1)  and type   $(H^1,L^1)$, (see Perez   \cite{P} and  Paluszynski   \cite{Pal}, respectively).
Moreover, Harboure et al \cite{HST} showed that $T_b$ is bounded from $H^1$ to $L^1$ if
and only if $b$ is equal to a constant almost everywhere. And Perez
\cite{P} found a subspace $H^1_b $ of $H^1$ such that $T_b$ maps continuously
$H^1_b $ into $L^1$. Ky \cite{Ky} found the largest subspace of $H^1$
such that all commutators $T_b$ of Calder\'{o}n-Zygmund operators are bounded from this subspace into $L^1$.
\par
For a $\delta$-Calder\'{o}n-Zygmund operator $T$, whose distribution kernel $k(x,y)$ satisfies
$
|k(x, y)-k(x, z)| \leq C
\frac{|y-z|^{\delta}}{
|x - z|^{n+\delta} }
$
if $2 |y - z| < |x - z|$, for some $ 0<\delta \leq 1$,
 the above results  were extended to $p\leq 1$ large enough,  that is,  $T$ maps continuously $H^p$ into
$L^p$ for $\frac{n}{n+\delta} < p\leq 1$, however, the  commutator $T_b$ with $b\in BMO$ does
not map $H^p$ into $L^p$,   but map a subspace $H_b^p$ of $H^p$  into
$L^p$, for $\frac{n}{n+\delta} < p\leq 1$, see Alvarez \cite{Alv}. While for $0<p\leq \frac{n}{n+\delta} $, no boundedness estimate of $T_b$ is fond in the literatures.
\par
For the weighted case, Alvarez et al \cite{ABKP} proved that the
commutators $T_b$ of some linear operators $T$ with  $b$ in $BMO$ are bounded on the weighted Lebesgue spaces $L^p_w$ with $ 1<p<\infty$ and $w \in A_p$, where $A_p$ denotes the class of Muckenhoupt weights \cite{Muck}.
Similar to the unweighted case, $T_b$ may not be bounded from the weighted Hardy
space $H^1_w$ into the weighted Lebesgue space $L^1_w$ when  $b\in BMO$ and $w$ is a Muckenhoupt weight. Recently, Liang et al \cite{LKY} found out a  subspace  of $BMO$  such that, when $b$ belongs to this subspace, $T_b$ is bounded from $H^1_w$ to  $L^1_w$ for $w \in A_{1+\delta/n}$, where $T$ is  a $\delta$-Calder\'{o}n-Zygmund operator.
\par
In general, for Muckenhoupt class, when  $b\in BMO$, the commutator $T_b$ of a classical operator $T$ does not have  the same  endpoint estimate from $H^p_w$ into  $L^p_w$ as $T$.
Naturally, we ask whether  there exists   classes  of  weights such that, for these weights, when $b\in BMO$, the commutator $T_b$  has  the  same endpoint estimate from $H^p_w$ into  $L^p_w$ and  from $H^p_w$ into itself, as their associated   operator $T$ .
\par
At the same time,  only for part $p $ in $(0,1]$, the $BMO$ commutators have the endpoint estimates from a subspace of $H^p$ to $L^p$ or the weighted  endpoint estimates from a subspace of $H_w^p$ to $L_w^p$ for a  $w$ in $A_1$.
Naturally, we ask whether   there exists some endpoint estimates (or weighted endpoint estimates) of $BMO$ commutators  for all $p$ in $(0,1]$.
\par
  Although  the $L^p$  (or $L^p_w$) estimate with $1<p<\infty$ of the $BMO$ commutator of the classical singular integral operators were extended  to the commutators of many other operators, see \cite{SeTo, ABKP, GHST, CD},
  there are still many   operators, especially some maximal operators, such as Carleson  operator, et al,  the $L^p$ estimate  of their $BMO$ commutators  are  unknown.
Naturally, we ask  whether   there exists  estimates  of these commutators.
\par
The purpose of the present paper is to give  answers to the above questions.

 Let $  w\in A _{q,r}   $ with $ 0<q <p  $  and  $ 1<r <\infty  $, (see below for the definition).

 For the commutators, which are known to be bounded on $L^p$ for all $1<p<\infty$, which may also be known to be bounded from a subspace of  $H^p$ to $L^p$  for some  $0<p\leq 1$, we extend the estimate to all $0<p<\infty$,
  we prove that they are bounded  from some subspaces of $L^p_w$ to $L^p_w$  and to themselves for all $0<p<\infty$, in particular,  we get  that they are bounded   from  $H^p_w$ to $L^p_w$  and to itself for all $0<p\leq 1$.

  For the commutators $T_b$ which are still unknown whether they are bounded on $L^p$ for  $1<p<\infty$ but the associated operators $T$  is known to be bounded  on $L^p$ for all   $1<p<\infty$, we give a estimate for all $0<p<\infty$, we prove that they  are bounded from some subspaces of $L^p_w$ to $L^p_w$  and from some subspaces of $L^p_w$ to others  for all $0<p<\infty$, in particular,  we get also that they are bounded   from  $H^p_w$ to $L^p_w$  and to itself for all $0<p\leq 1$.

\par
\par
Let us first intruduce some definitions.

\par  A  nonnegative local integrable function $w$ is called a weight. $w(Q)$ denotes $\int_Qw$.  For any cube $Q$ and $\lambda>0$,  $ \lambda Q $ denotes the cube concentric with $Q$ whose each edge is $\lambda$ times as long. We write  $\bar{p}=\inf \{p,1\}$ for $0<p<\infty$.

\begin{definition}
  Let   $0< p<\infty$. We say  a weight $w\in D_{p}$  if    $w$  satisfies    the doubling condition of order $p$
 \begin{equation}\label{2.2}
 w(\lambda Q)\leq C  \lambda^{np} w(Q)
  \end{equation}
   for any cube $Q $ and $\lambda>1$, where $C$ is a  constant  independent of $Q $ and $\lambda$.
\end{definition}

\par
\begin{definition}
  Let   $1< r<\infty$. We say a weight
 $w \in RH_r$  if $w$
 satisfies   the reverse H\"{o}lder condition of order $r$
$$ \left(\frac{1}{|Q|}\int_Q w^{r}\right)^{1/r}\leq \frac{C}{|Q|}\int_Q w$$
 for  every cube $Q$,  where  $C$ is a  constant  independent of $Q$.

\end{definition}

\par We denote $D _{p} \cap RH_r $ by  $A _{p,r} $ for $ 0<p <\infty  $  and  $ 1<r <\infty  $.

\par
\begin{definition}\label{def_2.1}
 We say a weight $w\in P $ if there exist a sequence $\{Q_i\}$ of cubes with $w(Q_i)>0$ for each $i$ and whose interiors are disjoint each other such that  ${\bf R}^n=\bigcup_{i=1}^{\infty} Q_i$ .
\end{definition}
\par

\par
\begin{definition}\label{def_3.1}
Let $  0< p < \infty ,
0< s \leq \infty$ and $ w$ be a weight.
 A function $h$ is said to be a
 $ (p, s, w)$-block, if there is a cube  $Q \subset {\bf R}^n$ for which
\par (i)~~~~supp $ h\subseteq Q$,
\par (ii)~~~~$\|h\|_{L^{s}   }\leq |Q|^{1/s}w(Q)^{-1/p}.$
\end{definition}
\par
\begin{definition}\label{def_3.1}
 Let $ 0< p <  \infty, w\in A _{q,r}   $ with $ 0<q <p  $  and  $ 1<r <\infty  $,
and  $  1 < s\leq \infty  $ and  $  rp/(r-1) \leq s  $.
\par
The spaces $BH^{p,s}_{w}$ consists of functions $f$ which can be written  as
$$f=\sum\limits_{k=1}^{\infty} \lambda _k h_k ,$$
   where
  $h_k$ are $(p,s, w)$-blocks and $\lambda _k$ are real numbers with
 $\sum\limits_{k=1}^{\infty} |\lambda _k|^{\bar{p}} <+ \infty .$
\par
 We  equip $BH^{p,s}_{w}$ with the quasi-norm,
$$
\|f\|_{BH^{p,s}_{w}}=
 \inf  \left( \sum\limits_{k=1}^{\infty} |\lambda _k|^{\bar{p}} \right)^{1/\bar{p}},
  $$
where the infimum is taken over all the above decompositions of $f$.
\end{definition}

\par We have the following facts:
\par
\begin{fact}\label{Re3.1}
Under the conditions of $p,w,q,s$ in Definition 1.5,
every element  $\sum_{i=1}^{\infty}\lambda_i a_i$ in $BH^{p,s}_{w} $  converges in $ H^{p}_{w} $ and $w$-a.e..
\end{fact}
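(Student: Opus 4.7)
The proof hinges on a uniform Hardy-space bound $\|h\|_{H^p_w}\leq C$ for every $(p,s,w)$-block $h$; once this is in hand, both convergences follow routinely.

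\textbf{Step 1 (uniform $L^p_w$-bound on blocks).} Let $h$ be a $(p,s,w)$-block supported in a cube $Q$. The hypothesis $s>r_wp/(r_w-1)$ is equivalent to $s/(s-p)<r_w$, so $w\in RH_{s/(s-p)}$. H\"older's inequality with exponents $s/p$ and $s/(s-p)$ together with the reverse H\"older bound $\bigl(\int_Q w^{s/(s-p)}\bigr)^{(s-p)/s}\leq C|Q|^{-p/s}w(Q)$ and the size condition $\|h\|_{L^s}\leq|Q|^{1/s}w(Q)^{-1/p}$ give $\|h\|_{L^p_w}\leq C$ uniformly in $h$.

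\textbf{Step 2 (uniform $H^p_w$-bound on blocks).} This is the main obstacle, since the blocks of Definition 1.3 carry no vanishing-moment condition and hence are not classical atoms. I would estimate the grand maximal function $\mathcal{M}h$ and split $\int(\mathcal{M}h)^p w=\int_{2Q}+\int_{\mathbb{R}^n\setminus 2Q}$. The local contribution on $2Q$ is controlled by the $L^s$-boundedness of $\mathcal{M}$, H\"older, reverse H\"older, and the doubling of $w\in A^+_q\subset A_\infty$. For the remote piece, the off-support bound $\mathcal{M}h(x)\lesssim \|h\|_{L^1}|x-x_Q|^{-n}$ together with a dyadic annular decomposition reduces the integral to a multiple of $\sum_{k\geq 1}2^{-knp}w(2^{k+1}Q)$. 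Applying the $A^+_q$ inequality with $E=Q$ inside $Q'=2^{k+1}Q$ gives $w(2^{k+1}Q)\leq C2^{knq}w(Q)$, so the series is bounded by $w(Q)\sum_k 2^{kn(q-p)}<\infty$ precisely because $q<p$; no moment cancellation is needed, the strict inequality $q<p$ supplying the decay usually provided by vanishing moments.

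\textbf{Step 3 (convergences).} With $\|a_i\|_{H^p_w}\leq C$, the $\bar p$-subadditivity of $\|\cdot\|_{H^p_w}$ yields, for the partial sums $S_N=\sum_{i=1}^N\lambda_i a_i$,
$$\|S_M-S_N\|_{H^p_w}^{\bar p}\leq \sum_{i=N+1}^M|\lambda_i|^{\bar p}\|a_i\|_{H^p_w}^{\bar p}\leq C\sum_{i=N+1}^M|\lambda_i|^{\bar p}\to 0$$
as $N\to\infty$, so $(S_N)$ is Cauchy and converges in the complete space $H^p_w$. For $w$-a.e.\ convergence, when $p\leq 1$ (so $\bar p=p$) integrate $\sum_i|\lambda_i|^p|a_i|^p$ against $w$: by Step 1 this equals $\sum_i|\lambda_i|^p\|a_i\|_{L^p_w}^p\leq C\sum_i|\lambda_i|^p<\infty$, so the integrand is finite $w$-a.e., and the elementary inequality $(\sum c_i)^p\leq \sum c_i^p$ then forces absolute convergence of $\sum_i\lambda_i a_i(x)$ pointwise $w$-a.e. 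When $p>1$, $\bar p=1$, and Minkowski in $L^p_w$ combined with $\sum_i|\lambda_i|<\infty$ closes the argument.
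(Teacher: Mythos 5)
The paper itself does not prove Fact~1.5 --- it defers to reference~[L] (``See [L] for above definitions and facts'') --- so there is no in-paper argument to compare against line by line. Judged on its own merits, your proof is correct and captures the essential mechanism that makes the statement true: the off-support bound $\mathcal{M}h(x)\lesssim\|h\|_{L^1}|x-x_0|^{-n}$ for a block $h$ has no extra decay (blocks carry no vanishing moments), yet the tail $\sum_{k\geq 1}2^{-knp}w(2^{k+1}Q)$ still converges because the $A^+_q$ inequality with $E=Q$, $Q'=2^{k+1}Q$ gives $w(2^{k+1}Q)\lesssim 2^{knq}w(Q)$, and $q<p$ supplies the geometric decay that moment cancellation would normally provide. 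This is exactly the device the paper itself uses in its Theorem~2.1 estimates of $II_1$, $II_2$, so your reading of the mechanism is consistent with the authors'. Your Step~1 (uniform $\|h\|_{L^p_w}\leq C$ via H\"older and reverse H\"older, valid since $s>r_wp/(r_w-1)$ forces $s/(s-p)<r_w$ and $s>p$) and Step~3 ($\bar p$-subadditivity giving the Cauchy property in $H^p_w$, plus absolute $w$-a.e.\ convergence from $(\sum c_i)^{\bar p}\leq\sum c_i^{\bar p}$) are both sound. Two small polish points: you should cover $s=\infty$ separately (the H\"older step degenerates but the estimate is immediate from $\|h\|_{L^\infty}\leq w(Q)^{-1/p}$), and in Step~2 the constant in front of the annular sum is $\|h\|_{L^1}^p|Q|^{-p}$, which combines with $w(Q)$ from the $A^+_q$ bound and $\|h\|_{L^1}\leq|Q|w(Q)^{-1/p}$ to give exactly $C$; you state the conclusion correctly but elide this bookkeeping.
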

\par
\begin{fact}\label{Re3.1}
 Let $ 0< p \leq 1$,
  $ w\in A _{q,r}  $ with $ 0<q <p  $  and  $ 1<r <\infty  $,
and  $   1 < s\leq \infty  $ and $  rp/(r-1) \leq s   $, we have
 $BH^{p,s}_{w} = H^{p}_{w}$, the classical weighted Hardy spaces.
\end{fact}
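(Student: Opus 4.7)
The plan is to establish the equality as two continuous embeddings. The easy direction, $H^{p}_{w}\hookrightarrow BH^{p,s}_{w}$, is essentially formal: the classical atomic decomposition of weighted Hardy spaces (Garc\'ia-Cuerva; Str\"omberg-Torchinsky) expresses every $f\in H^{p}_{w}$ with $0<p\le 1$ and $w\in A_{\infty}$ as $f=\sum_{k}\lambda_{k}a_{k}$, where the $a_{k}$ are $(p,s,w)$-atoms with $\sum_{k}|\lambda_{k}|^{p}\lesssim \|f\|_{H^{p}_{w}}^{p}$. Since each atom already satisfies the two size conditions of Definition 1.3 (the cancellation conditions are extra and can be discarded), the very same series realizes $f$ as an element of $BH^{p,s}_{w}$ with $\|f\|_{BH^{p,s}_{w}}\lesssim \|f\|_{H^{p}_{w}}$.

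For the reverse embedding $BH^{p,s}_{w}\hookrightarrow H^{p}_{w}$, the $p$-subadditivity of the $H^{p}_{w}$ quasi-norm (valid since $\bar{p}=p$ when $p\le 1$) together with the convergence afforded by Fact 1.5 reduces matters to a uniform bound $\|h\|_{H^{p}_{w}}\le C$ for every $(p,s,w)$-block $h$ supported in a cube $Q$. I would fix a grand maximal function $Mh$ and split
\[
\int (Mh)^{p}\,w\,dx \;=\; \int_{2Q}(Mh)^{p}\,w\,dx \;+\; \int_{(2Q)^c}(Mh)^{p}\,w\,dx.
\]
On $2Q$ I would apply H\"older with exponents $s/p$ and $(s/p)'$, invoke the $L^{s}$-boundedness of $M$ together with the block size bound $\|h\|_{L^{s}}\le|Q|^{1/s}w(Q)^{-1/p}$, and then absorb $\int_{2Q}w^{(s/p)'}$ via the reverse H\"older inequality; the hypothesis $s>r_{w}p/(r_{w}-1)$ rewrites exactly as $(s/p)'<r_{w}$, so $w\in RH_{(s/p)'}$ is available. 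What remains collapses to $C\,w(2Q)/w(Q)$, which is controlled by the doubling property inherited from $w\in A^{+}_{q}\subset A_{1}$.

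The tail piece on $(2Q)^c$ is the heart of the argument and where the main obstacle lies. Because blocks lack any moment cancellation, the only pointwise estimate at my disposal is the size bound $Mh(x)\lesssim \|h\|_{L^{1}}/|x-x_{Q}|^{n}$, combined with $\|h\|_{L^{1}}\le |Q|\,w(Q)^{-1/p}$. Decomposing $(2Q)^c$ into dyadic annuli $2^{k+1}Q\setminus 2^{k}Q$, the integral reduces to
\[
w(Q)^{-1}\sum_{k\ge 1} w(2^{k+1}Q)\,2^{-knp},
\]
and the $A^{+}_{q}$ condition applied to $Q\subset 2^{k+1}Q$ yields $w(2^{k+1}Q)/w(Q)\lesssim 2^{(k+1)nq}$, leaving the geometric series $\sum_{k}2^{-kn(p-q)}$, which converges precisely because $q<p$. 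This is the single point at which the strict gap between $q$ and $p$ is used, and it explains why the refined class $A^{+}_{q}$ rather than $A_{1}$ alone is the right setting for a block-type description of $H^{p}_{w}$ without cancellation.
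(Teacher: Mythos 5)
Your proposal is mathematically sound and, as far as one can tell, it goes through. The paper itself does not prove this fact but defers it to reference \cite{L}, so a step-by-step comparison is not possible from the text here; what follows is a critique of the argument on its own merits.

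The reverse embedding $BH^{p,s}_w\hookrightarrow H^p_w$ is the substantive part and your computation is correct. The pointwise bound $Mh(x)\lesssim\|h\|_{L^1}|x-x_Q|^{-n}$ on $(2Q)^c$ holds for the grand (or non-tangential) maximal function even without cancellation, because $\phi$ is Schwartz, so the tail estimate is available. The inner estimate is also right: $s>r_wp/(r_w-1)$ is equivalent to $(s/p)'<r_w$, so $w\in RH_{(s/p)'}$, and after H\"older the mass collapses to $w(2Q)/w(Q)\lesssim1$ by the doubling from Lemma 3.1. The tail series becomes $\sum_k 2^{-kn(p-q)}$, which converges only because of the strict gap $q<p$; you correctly identify this as the place where $A_q^+$ with $q<p$, rather than merely $A_1$, is essential.

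The one place you should be a bit more careful is the forward embedding. You assert that the classical atomic decomposition of $H^p_w$ already delivers atoms satisfying the \emph{unweighted} $L^s$ size condition $\|a\|_{L^s}\leq|Q|^{1/s}w(Q)^{-1/p}$. The standard Garc\'ia-Cuerva/Str\"omberg--Torchinsky atoms are usually sized in the \emph{weighted} norm ($\|a\|_{L^{q_0}_w}\leq w(Q)^{1/q_0-1/p}$), and that condition does not by itself imply the block size condition. The cleanest fix is to use $L^\infty$-atoms, whose decomposition for $H^p_w$ with $w\in A_\infty$ is standard: an $L^\infty$-atom has $\|a\|_\infty\leq w(Q)^{-1/p}$, which automatically gives $\|a\|_{L^s}\leq|Q|^{1/s}\|a\|_\infty\leq|Q|^{1/s}w(Q)^{-1/p}$ for every $s\leq\infty$, i.e. a $(p,s,w)$-block. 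With that substitution the forward embedding is indeed formal (discard the cancellation) and the proof is complete. Since the paper's own argument is in \cite{L} and not reproduced here, I cannot say whether it uses the maximal-function route you take or (as the machinery in Section~3 of this paper suggests) a molecular characterization, but your approach is a valid and essentially self-contained alternative.
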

\par
See \cite{L} for above definitions and facts. $w$-a.e. convergence means convergence for all $x\in E^c$ for some $E$ with $w(E)=0$.
\par
Recall that a locally integrable function $b$ is said to be in $BMO$ if
$$\|b\|_{BMO} := \sup_{Q}\frac{1}{|Q|}\int_{Q}|b(x) - b_Q| dx<\infty,$$
where $b_Q=\frac{1}{|Q|}\int_{Q}b(x)  dx$,
the supremum is taken over all cubes $Q \subset {\bf R}^n$.

\par
We will consider the  operator $T$ defined  for every $(p,s,w)-$bolck $h$ with supp $h \subset Q$, a cube with the cental  $x_0$, and  satisfying the size condition
\begin{equation}\label{1.1}
|Th(x)| \leq  C \frac{  \|h\|_{L^{1}}}{|x-x_0|^n}
\end{equation}
for $x\in (2n^{1/2}Q)^c $,
 where $C$ is a constant independent of $h$. $T$  is either a linear operator or a sublinear operator whose commutator satisfies the following condition
\begin{equation}\label{7.2}
|T_b(\sum\lambda_ja_j)(x)|\leq \sum|\lambda_j||T_ba_j(x)|,~w{\rm -a.e.},
\end{equation}
 for $f=\sum\lambda_ja_j \in   {B}L^{p,s}_{w}$.
  \par Here and below, $f=\sum\lambda_ja_j \in   {B}L^{p,s}_{w}$ (or $  H^{p}_{w}$) means that    each  $a_j$ is a $(  p,s, w)$-block and $\sum|\lambda_j|^{\bar{p}}<\infty$.

\par We will state our theorems and their applications in Section 2, and prove the theorems in Section 3.

\par Throughout the whole paper,  $C$ denotes a positive absolute constant not necessarily the same at each occurrence.
$s'$ denotes the conjugate number of $s$ with $1\leq s\leq \infty$, which satisfies $1/s+1/s'=1$.

We express our gratitude   to   David Cruz-Uribe   for his comments
which led to substantial  improvements of this paper.

\section
 { Theorems  and their applications}

\subsection{Theorems}
 Now let us state our theorems for   $BMO$ commutators of some sublinear operators.
\par
 \subsubsection{ }  For the commutators  $T_b$ which are known to be  bounded on $L^p$ with $1<p<\infty$,    we have the following theorems.
\par
\begin{theorem}\label{th_1}
    Let $0<p <\infty ,  w\in A _{q,r}  $ with $ 0<q <p  $  and  $ 1<r <\infty  $,   $  1   < s \leq \infty $ and $   rp/(r-1)  \leq s  $. Let $b\in BMO$.
      Suppose that a operator $T$  is defined for every $(p,s,w)$-block $h$  and   satisfies (1.2).  Suppose that
    $ \|T_b h\|_{L^{s}}\leq C \|b\|_{BMO} \| h\|_{L^{s}} $.

\par  (i)  If  $ T $  is a linear operator, then
  $T_b$ has an unique bounded extension (still denoted by $T_b$) from $BH^{p,s}_{w}$   to $L^{p}_{w}$ that satisfies
 \begin{equation}\label{7.4}
  T_b(\sum_{j=1}^{\infty}\lambda_ja_j)(x) =\sum_{j=1}^{\infty}\lambda_jT_ba_j(x),
  \end{equation}
  in $L^p_w$ and $w$-a.e., and
  \begin{equation}\label{7.4}
  \|T_bf\|_{L^{p}_{w}}   \leq   C \|b\|_{BMO} \|f\|_{BH^{p,s}_{w}},
    \end{equation}
    for all  $ f= \sum_{j=1}^{\infty}\lambda_ja_j \in BH^{p,s}_{w}$.

   \par (ii)  If $ T $ is a sublinear operator whose commutator satisfies (1.3)  for    $f=\sum\lambda_ja_j \in BH^{p,s}_{w} $,
 then $T_b$ has a bounded extension  from $BH^{p,s}_{w}$    to $L^{p}_{w}$  satisfying  (2.2).
  \end{theorem}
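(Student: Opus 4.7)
The plan is to reduce both parts of the theorem to a single uniform bound on $(p,s,w)$-blocks and then assemble the series in (2.1) via the block decomposition of $BH^{p,s}_w$. The first goal is to show that for every $(p,s,w)$-block $h$ supported in a cube $Q$ with center $x_0$,
$$\|T_bh\|_{L^p_w}\leq C\|b\|_{BMO}$$
with $C$ independent of $h$ and $Q$. I would split the $L^p_w$-norm into the contributions over $Q^* := 2n^{1/2}Q$ and over its complement, and handle them separately.

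On $Q^*$ the plan is to apply H\"older with the conjugate pair $(s/p,s/(s-p))$: the $L^s$-factor is controlled by the hypothesis $\|T_bh\|_{L^s}\leq C\|b\|_{BMO}\|h\|_{L^s}\leq C\|b\|_{BMO}|Q|^{1/s}w(Q)^{-1/p}$, while the weight integral $\int_{Q^*}w^{s/(s-p)}$ is handled by a reverse H\"older estimate. The hypothesis $s>r_wp/(r_w-1)$ is exactly the rearrangement of $s/(s-p)<r_w$, so $w\in RH_{s/(s-p)}$ is at our disposal; combined with the doubling $w(Q^*)\leq Cw(Q)$ coming from the $A_q^+$ inequality, this produces $\int_{Q^*}|T_bh|^pw\leq C\|b\|_{BMO}^p$.

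On $(Q^*)^c$ I would invoke the size condition (1.2). Although $(b(x)-b(\cdot))h(\cdot)$ is not itself a block, it is supported in $Q$ and positive homogeneity of $T$ lets (1.2) be applied after renormalization, yielding
$$|T_bh(x)|\leq C\,\frac{|Q|\,(|b(x)-b_Q|+\|b\|_{BMO})}{w(Q)^{1/p}\,|x-x_0|^n},$$
after estimating $\int_Q|b(y)-b_Q||h(y)|\,dy$ by H\"older, John--Nirenberg and the block normalization. The $L^p_w$-integral over $(Q^*)^c$ then decomposes over the dyadic annuli $2^kQ^*\setminus 2^{k-1}Q^*$; using $|b(x)-b_Q|\leq|b(x)-b_{2^kQ^*}|+Ck\|b\|_{BMO}$ together with the weighted BMO estimate $\int_R|b-b_R|^pw\leq C\|b\|_{BMO}^pw(R)$ (valid for $w\in A_\infty$ and every $p>0$), one reduces matters to a geometric sum in which the key ratio $w(2^kQ^*)/w(Q)$ is dominated by $C2^{knq}$ directly from the $A_q^+$ definition with $E=Q$, $Q'=2^kQ^*$. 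This weight growth is beaten by the kernel decay $2^{-knp}$ precisely because $q<p$, giving the far-field bound.

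Once the uniform block estimate is in hand, part (ii) will follow immediately from (1.3) and the $\bar p$-subadditivity of $\|\cdot\|_{L^p_w}^{\bar p}$: one obtains $\|T_bf\|_{L^p_w}^{\bar p}\leq\sum_j|\lambda_j|^{\bar p}\|T_ba_j\|_{L^p_w}^{\bar p}\leq C\|b\|_{BMO}^{\bar p}\|f\|_{BH^{p,s}_w}^{\bar p}$. For part (i) the same $\bar p$-subadditivity yields convergence of $\sum_j\lambda_jT_ba_j$ in $L^p_w$, while Fact 1.5 provides convergence of $\sum_j\lambda_ja_j$ in $H^p_w$ and $w$-a.e.; together these make the extension unambiguous and let linearity of $T$ on blocks deliver the pointwise identity (2.1). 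The main obstacle is the far-field estimate: the commutator generates the BMO oscillation factor $|b(x)-b_Q|$ that grows with distance and must be balanced, uniformly in the annulus index, against both the weight growth $w(2^kQ^*)/w(Q)$ and the kernel decay $|x-x_0|^{-np}$. It is exactly this balance that forces the strict inequality $q<p$, and is the point where the refinement $A_q^+$ (as opposed to classical $A_q$) plays its essential role when $p\leq 1$.
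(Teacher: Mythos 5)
Your proposal is correct and follows essentially the same route as the paper: you reduce to the uniform block estimate $\|T_bh\|_{L^p_w}\leq C\|b\|_{BMO}$, split into a near piece over a fixed dilate of $Q$ (handled by H\"older, the hypothesis on $T_b$ in $L^s$, and the reverse H\"older inequality, noting $s/(s-p)<r_w$) and a far piece (handled by the commutator split around $b_{Q}$, the size condition (1.2), dyadic annuli, the weighted BMO growth estimate, and the $A_q^+$ growth bound $w(2^iQ)/w(Q)\lesssim 2^{niq}$ with the series converging because $q<p$), and then assemble via $\bar p$-subadditivity, Cauchy completeness of $L^p_w$, and linearity/sublinearity. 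This matches the paper's decomposition into $I$, $II_1$, $II_2$ and its use of Lemmas 3.4--3.5 and the pointwise split (3.8).
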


\par  In particularly, we have from Theorem 2.1 and Fact 1.7 that

   \begin{corollary}
    Let $0<p \leq 1 $ and      $    w\in A _{q,r}  $ with $ 0<q <p  $  and  $ 1<r <\infty  $. Let $b\in BMO$.
     Suppose that a operator $T$  is defined for every $(p,s,w)$-block $h$ with  $  1   < s \leq \infty $ and $   rp/(r-1)  \leq s  $,  and satisfies (1.2).
    Suppose that     $ \|T_b h\|_{L^{s}}\leq C \|b\|_{BMO} \| h\|_{L^{s}} $.

\par     (i)  If  $ T $  is a linear operator, then
  $T_b$ has an unique bounded extension (still denoted by $T_b$) from $H^{p}_{w}$
    to $L^{p}_{w}$ that satisfies (2.1)   in $L^p_w$ and $w$-a.e., and
    \begin{equation}\label{7.4} \|T_b f\|_{L_w^{p}}\leq C \|b\|_{BMO} \| f\|_{H^{p}_{w}} \end{equation}
     for all  $ f= \sum_{j=1}^{\infty}\lambda_ja_j \in H^{p}_{w}(=BH^{p,s}_{w} )$.

   \par (ii) If $ T $ is a sublinear operator whose commutator satisfies (1.3)  for    $f=\sum\lambda_ja_j \in H^{p}_{w}$,
 then $T_b$ has a bounded extension  from $H^{p}_{w}$    to $L^{p}_{w}$  that satisfies  (2.3).
   \end{corollary}

  \begin{theorem}\label{th_2}
   Let $0<p <\infty ,  w\in A _{q,r}  $ with $ 0<q <p  $  and  $ 1<r <\infty  $,  and $\max\{ p/q, rp/(r-1) \} < s < \infty $. Let $b\in BMO$.
      Suppose that a operator $T$  is defined for every $(p,s,w)$-block $h$  and   satisfies (1.2).  Suppose that
    $ \|T_b h\|_{L^{s}}\leq C \|b\|_{BMO} \| h\|_{L^{s}} $.

 \par (i)  If  $ T $  is a linear operator,
  then
  $T_b$ has an unique bounded extension (still denoted by $T_b$) from $BH^{p,s}_{w}$     to itself that satisfies (2.1)
   in $BH^{p,s}_{w}$ and $w$-a.e., and
  \begin{equation}\label{7.4}
  \|T_bf\|_{BH^{p,s}_{w}}   \leq   C \|b\|_{BMO} \|f\|_{BH^{p,s}_{w}},
    \end{equation}
    for all  $ f= \sum_{j=1}^{\infty}\lambda_ja_j \in BH^{p,s}_{w}$.

   \par (ii)  If $ T $ is a sublinear operator whose commutator satisfies (1.3)  for    $f=\sum\lambda_ja_j \in BH^{p,s}_{w} $,  and $w \in P$,
 then $T_b$ has a bounded extension  from $BH^{p,s}_{w}$    to itself  that satisfies  (2.4).
   \end{theorem}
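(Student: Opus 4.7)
The plan is to reduce Theorem 2.3 to a single block-to-blocks statement: for every $(p,s,w)$-block $h$ supported on a cube $Q$ with centre $x_0$, one can write $T_b h = \sum_{k=-1}^{\infty} \lambda_k a_k$ where each $a_k$ is a $(p,s,w)$-block supported in the dilated cube $2^{k+1}Q^*$ (with $Q^* := 2n^{1/2}Q$, $a_{-1}$ supported in $Q^*$), and $\sum_k |\lambda_k|^{\bar p} \leq C\|b\|_{BMO}^{\bar p}$. Granting this, part (i) follows by applying the claim to each summand of $f = \sum_j \lambda_j a_j$, collecting the doubly-indexed family of blocks, and reconciling (2.1) in $BH^{p,s}_w$ via the $w$-a.e.\ convergence supplied by Fact 1.5 together with the $L^p_w$ extension already secured by Theorem 2.1(i). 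Part (ii) further requires the $\mathcal{P}$ hypothesis in order to upgrade the sublinear pointwise envelope $|T_b f| \leq \sum_j |\lambda_j|\,|T_b a_j|$ (coming from (1.3)) into a genuine block decomposition of $T_b f$ itself.

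For the block-to-blocks claim, split $T_b h = T_b h \cdot \chi_{Q^*} + \sum_{k\geq 0} T_b h \cdot \chi_{A_k}$ with $A_k = 2^{k+1}Q^*\setminus 2^k Q^*$. The local piece is bounded in $L^s$ by the hypothesized $L^s$-estimate for $T_b$ and the block size of $h$, so (after comparing $|Q^*|, w(Q^*)$ with $|Q|, w(Q)$) it is a $(p,s,w)$-block on $Q^*$ with coefficient $\lesssim \|b\|_{BMO}$. For $x\in A_k$, applying the size condition (1.2) to the auxiliary function $y \mapsto (b(x)-b(y))h(y)$, which is supported on $Q$, gives
\begin{equation*}
|T_b h(x)| \leq C(2^k\ell(Q))^{-n}\bigl(|b(x)-b_Q|\,\|h\|_{L^1} + \|(b-b_Q)\chi_Q\|_{L^{s'}}\|h\|_{L^s}\bigr).
\end{equation*}
The standard BMO estimates $\|(b-b_Q)\chi_Q\|_{L^{s'}}\leq C\|b\|_{BMO}|Q|^{1/s'}$ and $|b_Q - b_{2^{k+1}Q^*}|\leq Ck\|b\|_{BMO}$ combined with the block size then produce $\|T_b h\cdot\chi_{A_k}\|_{L^s} \leq C(k+1)\|b\|_{BMO}(2^k)^{n/s-n}|Q|^{1/s}w(Q)^{-1/p}$.

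Turning this $L^s$ bound into a block coefficient on $2^{k+1}Q^*$ invokes the $A_q^+$ growth $w(2^{k+1}Q^*)/w(Q)\leq C(2^k)^{nq}$, which is read off from Definition 1.1 by reversing the roles of cube and subset. After division, one obtains $|\lambda_k| \leq C(k+1)\|b\|_{BMO}(2^k)^{-n(1-q/p)}$, and the assumption $q<p$ renders the sequence $\bar p$-summable. The hypotheses $s<\infty$ and $s>p/q$ enter precisely at the step where this pointwise block decomposition of $T_b h$ must be recognised as convergent inside $BH^{p,s}_w$ (rather than merely inside $L^p_w$), so that the quasi-norm in (2.4) is genuinely controlled rather than only an $L^p_w$-norm upper bound.

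The main obstacle is the sublinear case. The sublinearity condition (1.3) provides only a pointwise almost-everywhere envelope $|T_b f| \leq \sum_j |\lambda_j|\,|T_b a_j|$, not an identity, and this envelope is not, a priori, a block decomposition of $T_b f$. The hypothesis $w\in \mathcal{P}$ is indispensable here: using a fixed countable partition $\{Q_i\}$ of $\mathbf{R}^n$ into cubes of positive $w$-measure, the plan is to localise each piece of the envelope onto one of the $Q_i$ (or a small dilate), normalise it to a genuine $(p,s,w)$-block, and assemble the resulting family together with the doubly-indexed blocks produced by applying the block-to-blocks claim to each $T_b a_j$. The total coefficient will be $\bar p$-summable with bound $C\|b\|_{BMO}\|f\|_{BH^{p,s}_w}$, and (2.4) then follows from the quasi-triangle inequality for the $\bar p$-quasi-norm on $BH^{p,s}_w$.
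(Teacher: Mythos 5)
Your approach to part (i) is genuinely different from the paper's, and in essence sound. The paper proves the key Claim $\|T_bh\|_{BH^{p,s}_w}\le C\|b\|_{BMO}$ for a block $h$ by verifying that $T_bh$ is a $(p,s,q,w,\varepsilon)$-molecule and invoking the molecular characterization of $BH^{p,s}_w$ (Definition~3.8, Proposition~3.9): it estimates the two quantities $A_s$ and $B_s$, splitting $A_s$ over $Q^{x_0}_{2^{k_0+1}}$ and its complement, and uses (3.8), (1.2), Lemma~3.5 and the doubling estimate (3.17). Your proposal instead builds an explicit dyadic-annulus block decomposition $T_bh=\sum_{k\ge -1}\lambda_k a_k$ with $a_k$ supported in $2^{k+1}Q^*\setminus 2^kQ^*$, and your coefficient estimate $|\lambda_k|\lesssim (k+1)\|b\|_{BMO}2^{-kn(1-q/p)}$ with $\bar p$-summability from $q<p$ is correct; this is more elementary and bypasses the molecular machinery entirely. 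One small but real misstatement: you claim $s>p/q$ ``enters precisely at the step where this pointwise block decomposition ... must be recognised as convergent inside $BH^{p,s}_w$,'' but nothing in your annulus construction actually uses $s>p/q$ --- once the coefficients are $\bar p$-summable, the disjointly-supported sum is by definition an element of $BH^{p,s}_w$. In the paper this inequality is what makes $d-c=q/p-1/s>0$, a structural requirement of Definition~3.8; it is a constraint of the molecular route, not of yours. You also implicitly use, and should state, the step that converts (1.2) (stated only for blocks) into a bound on $T((b(x)-b)h)$: as in the paper's derivation of (3.8), one first rewrites $(b(x)-b)h=\mu_1 h+\mu_2\tilde h$ with $\tilde h$ a rescaled $(p,s,w)$-block before applying the size condition.

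For part (ii), your ``plan'' is where the proposal genuinely falls short. Localising the pointwise envelope $\sum_j|\lambda_j||T_ba_j|$ onto a fixed partition $\{Q_i\}$ and trying to renormalise those restrictions into blocks is both unnecessary and not obviously workable --- nothing guarantees the restricted pieces have controlled $L^s$ norms relative to $w(Q_i)$. The paper's argument is shorter and uses two structural facts about $BH^{p,s}_w$ you have not invoked: Lemma~3.6(i), the monotonicity/lattice property ($|f|\le|g|$ a.e.\ and $g\in BH^{p,s}_w$ imply $f\in BH^{p,s}_w$ with $\|f\|\le\|g\|$), and Lemma~3.7(ii), that $w\in\mathcal P$ upgrades $w$-a.e.\ to a.e. One first uses Lemma~3.6(ii) and the block estimate to conclude $\sum_j|\lambda_j||T_ba_j|\in BH^{p,s}_w$ with the right bound, then $\mathcal P$ turns the sublinear inequality (1.3) from $w$-a.e.\ into a.e., and Lemma~3.6(i) finishes. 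Without Lemma~3.6(i) there is no way to pass from a pointwise majorant in $BH^{p,s}_w$ to membership of $T_bf$ itself, and your proposal does not supply a substitute for it.
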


\par   In particularly, we have from Theorem 2.3 and Fact 1.7 that
   \begin{corollary}\label{co_2}
     Let $0<p \leq 1 $ and      $ w\in A _{q,r}  $ with $ 0<q <p  $  and  $ 1<r <\infty  $. Let $b\in BMO$.
     Suppose that a operator $T$  is defined for every $(p,s,w)$-block $h$ with $\max\{ rp/(r-1),p/q\}< s < \infty  $ and satisfies (1.2).
    Suppose that     $ \|T_b h\|_{L^{s}}\leq C \|b\|_{BMO} \| h\|_{L^{s}} $.

  \par   (i)  If $ T $  is a linear operator,  then
  $T_b$ has an unique bounded extension (still denoted by $T_b$) from $H^{p}_{w}$
    to itself that satisfies (2.1)   in $H^p_w$ and $w$-a.e., and
    \begin{equation}\label{7.4} \|T_b f\|_{H_w^{p}}\leq C \|b\|_{BMO} \| f\|_{H^{p}_{w}} \end{equation}
     for all  $ f= \sum_{j=1}^{\infty}\lambda_ja_j \in H^{p}_{w}(=BH^{p,s}_{w} )$.

   \par (ii)  If $ T $ is a sublinear operator whose commutator satisfies (1.3)  for    $f=\sum\lambda_ja_j \in H^{p}_{w} $. and $w\in  P$,
 then $T_b$ has a bounded extension  from $H^{p}_{w}$    to itself that satisfies  (2.5).
   \end{corollary}

 \subsubsection{ } For the commutators $T_b$ that are not yet known whether they  are bounded on $L^p$, but   knew that the associated operators $T$  are bounded  on $L^p$,   $1<p<\infty$,  we have the following theorems.

   \begin{theorem}\label{th_3}
    Let $0<p <\infty ,  w\in A _{q,r}  $ with $ 0<q <p  $  and  $ 1<r <\infty  $,  and $\max\{ 1, rp/(r-1) \} < s \leq \infty $. Let $b\in BMO$.
      Suppose that a operator $T$  is defined for every $(p,s,w)$-block $h$  and   satisfies (1.2).  Suppose that
     $$ \|T h\|_{L^{s}}\leq C \| h\|_{L^{s}}, $$
    and there is a $\tilde{s}$, which satisfies $\max\{ rp/(r-1),s/(s+1) \}\leq \tilde{s}  <s $,  such that
          $$ \|T h\|_{L^{\tilde{s}}}\leq C \| h\|_{L^{\tilde{s}}}. $$

 \par (i)  If  $ T $  is a linear operator, then
  $T_b$ has an unique bounded extension (still denoted by $T_b$) from $BH^{p,s}_{w}$
    to $L^{p}_{w}$ that satisfies (2.1)
   in $L^p_w$ and $w$-a.e., and (2.2)
      for all  $ f= \sum_{j=1}^{\infty}\lambda_ja_j \in BH^{p,s}_{w}$.

   \par (ii)  If $ T $ is a sublinear operator satisfying (1.3)  for    $f=\sum\lambda_ja_j \in BH^{p,s}_{w} $,
  then  $T_b$ has a bounded extension  from $BH^{p,s}_{w}$    to $L^{p}_{w}$  that satisfies  (2.2).
   \end{theorem}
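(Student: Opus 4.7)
The plan is to reduce everything to a uniform per-block bound $\|T_b h\|_{L^p_w} \leq C\|b\|_{BMO}$ for every $(p,s,w)$-block $h$ supported in a cube $Q$ centred at some $x_0$. Once this is in hand, the passage to $f = \sum_k\lambda_k h_k \in BH^{p,s}_w$ is standard: the $\bar p$-power inequality yields (2.2) directly in the linear case, while in the sublinear case it is combined with (1.3). Absolute $\bar p$-summability of $\sum_k \lambda_k T_b h_k$ in $L^p_w$ provides convergence in norm, a $w$-a.e.\ convergent sub-series, the identity (2.1), and (for part (i)) uniqueness of the extension, exactly as in the proof of Theorem 2.1.

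The per-block bound starts from the pointwise decomposition
\[
T_b h(x) = (b(x)-b_Q)\,Th(x) - T\bigl((b-b_Q)h\bigr)(x),
\]
combined with an inner/outer splitting at $2n^{1/2}Q$. On the outer region the size condition (1.2), applied to $h$ and to the $Q$-supported function $(b-b_Q)h$, reduces matters to annular tails; H\"older and John--Nirenberg give $\|(b-b_Q)h\|_{L^1}\leq C\|b\|_{BMO}|Q|\,w(Q)^{-1/p}$. Decomposing along $2^{k+1}Q\setminus 2^k Q$, the $A_q^+$ estimate $w(2^{k+1}Q)\leq C 2^{knq}w(Q)$, reverse H\"older for $w$ (valid because $s>r_wp/(r_w-1)$), and John--Nirenberg together yield $\int_{2^{k+1}Q}|b-b_Q|^p w \leq C(1+k^p)\|b\|_{BMO}^p w(2^{k+1}Q)$; the series $\sum_k(1+k^p)2^{kn(q-p)}$ converges precisely because $q<p$, and the outer region contributes $C\|b\|_{BMO}^p$.

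The main obstacle is the inner region, since $T_b$ is \emph{not} assumed to be $L^s$-bounded. To bypass this I would apply H\"older with the pair $(\tilde s/p,(\tilde s/p)')$ together with reverse H\"older for $w$ at exponent $\tilde s/(\tilde s-p)$---both legitimate because $\tilde s>r_wp/(r_w-1)>p$---to obtain
\[
\int_{2n^{1/2}Q}|T_b h|^p w \leq C\,\|T_b h\|_{L^{\tilde s}(2n^{1/2}Q)}^p\,|Q|^{-p/\tilde s}w(Q).
\]
The task then reduces to $\|T_b h\|_{L^{\tilde s}(2n^{1/2}Q)} \leq C\|b\|_{BMO}|Q|^{1/\tilde s}w(Q)^{-1/p}$. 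For $\|T((b-b_Q)h)\|_{L^{\tilde s}}$ I would invoke the $L^{\tilde s}$-boundedness of $T$, a H\"older inequality $1/\tilde s=1/r+1/s$ (valid because $\tilde s<s$, with $r\geq 1$ thanks to $\tilde s\geq s/(s+1)$), and $\|(b-b_Q)\chi_Q\|_{L^r}\leq C\|b\|_{BMO}|Q|^{1/r}$; for $\|(b-b_Q)Th\|_{L^{\tilde s}(2n^{1/2}Q)}$ I would use the same local H\"older together with the $L^s$-boundedness of $T$. Multiplying through delivers the desired per-block bound $\|T_b h\|_{L^p_w}\leq C\|b\|_{BMO}$. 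The most delicate bookkeeping will be keeping the three exponent conditions $r_wp/(r_w-1)<\tilde s<s$, $\tilde s\geq s/(s+1)$, and $q<p$ invoked at exactly the right steps.
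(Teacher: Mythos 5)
Your proposal is correct and follows essentially the same route as the paper: the outer annular estimate via (1.2), doubling, and John--Nirenberg is identical, and on the inner cube you invoke exactly the paper's two ingredients (reverse H\"older at exponent $\tilde s/(\tilde s-p)$, justified by $\tilde s > r_wp/(r_w-1)$, and the $L^{\tilde s}$/$L^s$ boundedness of $T$ combined with H\"older at $1/\tilde s = 1/r + 1/s$, with $r\ge 1$ secured by $\tilde s \ge s/(s+1)$). The only difference is cosmetic: you apply the $\tilde s/p$-H\"older once to $\|T_bh\|_{L^p_w}$ and then split $T_bh$, whereas the paper splits first into $I_1,I_2$ and applies the same H\"older to each piece; the two orderings commute.
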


  \par In particularly, we have from Theorem 2.5 and Fact 1.7 that

    \begin{corollary}\label{co_3}
     Let $0<p \leq 1 $ and $   w\in A _{q,r}  $ with $ 0<q <p  $  and  $ 1<r <\infty  $.   Let $b\in BMO$.
      Suppose that a operator $T$  is defined for every $(p,s,w)$-block $h$ with $\max\{ 1, rp/(r-1) \} < s \leq \infty $ and   satisfies (1.2).  Suppose that
     $$ \|T h\|_{L^{s}}\leq C \| h\|_{L^{s}}, $$
    and there is a $\tilde{s}$, which satisfies $\max\{ rp/(r-1),s/(s+1) \}\leq \tilde{s}  <s $,  such that
          $$ \|T h\|_{L^{\tilde{s}}}\leq C \| h\|_{L^{\tilde{s}}}. $$

\par (i)  If  $ T $  is a linear operator, then
  $T_b$ has an unique bounded extension (still denoted by $T_b$) from $H^{p}_{w}$     to $L^{p}_{w}$ that satisfies (2.1)    in $H^p_w$ and $w$-a.e., and (2.3)
      for all  $ f= \sum_{j=1}^{\infty}\lambda_ja_j \in H^{p}_{w} (=BH^{p,s}_{w})$.

   \par (ii)  If $ T $ is a sublinear operator whose commutator satisfies (1.3)  for    $f=\sum\lambda_ja_j \in H^{p}_{w} $,
  then $T_b$ has a bounded extension  from $H^{p}_{w}$    to $L^{p}_{w}$ that  satisfies  (2.3).
    \end{corollary}

  \begin{theorem}\label{th.2.7}
    Let $0<p <\infty $ and      $  w\in A _{q,r}  $ with $ 0<q <p  $  and  $ 1<r <\infty  $. Let $\max\{ p/q, rp/(r-1) \} < \tilde{s} < s  \leq \infty $ and $s/(s+1) \leq \tilde{s}  <s $.
    Let $b\in BMO$.
      Suppose that a operator $T$  is defined for every $(p,s,w)$-block $h$  and   satisfies (1.2).  Suppose that
             $ \|T h\|_{L^{ s}}\leq C \| h\|_{L^{s}}, $
          and
          $ \|T h\|_{L^{\tilde{s}}}\leq C \| h\|_{L^{\tilde{s}}}. $

  \par     (i) If  $ T $  is a linear operator, then
  $T_b$ has an unique bounded extension (still denoted by $T_b$) from $BH^{p,s}_{w}$
    to $BH^{p, \tilde{s}}_{w}$ that satisfies (2.1)
   in $BH^{p, \tilde{s}}_{w}$ and $w$-a.e., and
    \begin{equation}
     \|T_b f\|_{BH^{p,\tilde{s}}_{w}}\leq C \|b\|_{BMO} \| f\|_{BH^{p,s}_{w}}
      \end{equation}
    for all $f=\sum\lambda_ja_j\in BH^{p,s}_{w}$.

   \par (ii) If $ T $ is a sublinear operator  whose commutator satisfies (1.3)  for    $f=\sum\lambda_ja_j \in BH^{p,s}_{w} $, and  $w\in  P$,
 then $T_b$ has a bounded extension  from $BH^{p,s}_{w}$     to $BH^{p, \tilde{s}}_{w}$ that satisfies  (2.6).
   \end{theorem}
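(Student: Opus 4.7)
The plan is to reduce everything to a single block-level estimate: for each $(p,s,w)$-block $h$ supported on a cube $Q$ with center $x_0$, I will produce a decomposition
\[
T_b h = \sum_{k=0}^{\infty}\mu_k g_k,
\]
where each $g_k$ is a $(p,\tilde s,w)$-block on the concentric dilate $\tilde Q_k := 2^{k+1}n^{1/2}Q$ (and $\tilde Q_0 := 2n^{1/2}Q$) and $\sum_k |\mu_k|^{\bar p} \le C\|b\|_{BMO}^{\bar p}$. The decomposition is produced by splitting $T_b h = T_b h\cdot\chi_{\tilde Q_0} + \sum_{k\ge 1} T_b h\cdot\chi_{\tilde Q_k\setminus\tilde Q_{k-1}}$; the doubling property of $w\in A_\infty$ absorbs the $n$-dependent constants in passing from $Q$ to $\tilde Q_k$.

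For the local piece on $\tilde Q_0$ I rely on the commutator identity $T_b h = (b-b_Q)Th - T((b-b_Q)h)$ together with both of the $L^s$- and $L^{\tilde s}$-boundedness hypotheses on $T$. Let $r$ satisfy $\tfrac{1}{\tilde s} = \tfrac{1}{s} + \tfrac{1}{r}$, which is legitimate since $\tilde s < s$. H\"older plus the standard $BMO$ estimate $\|b-b_Q\|_{L^r(\tilde Q_0)} \le C\|b\|_{BMO}|\tilde Q_0|^{1/r}$ yields
\[
\|(b-b_Q)Th\|_{L^{\tilde s}(\tilde Q_0)} \le \|b-b_Q\|_{L^r(\tilde Q_0)}\,\|Th\|_{L^s} \le C\|b\|_{BMO}\,|Q|^{1/\tilde s}\,w(Q)^{-1/p},
\]
and the analogous bound for $T((b-b_Q)h)$ follows after absorbing $(b-b_Q)$ into $h$ and applying the $L^{\tilde s}$-boundedness of $T$. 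Thus the local piece is at most $C\|b\|_{BMO}$ times a $(p,\tilde s,w)$-block on $\tilde Q_0$.

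For the annuli $k\ge 1$ I give up the $L^{\tilde s}$-boundedness and instead use the size condition (1.2). The pointwise estimate $|Th(x)|\le C\|h\|_{L^1}|x-x_0|^{-n}\le C2^{-kn}w(Q)^{-1/p}$ on $\tilde Q_k\setminus\tilde Q_{k-1}$, combined with the John--Nirenberg consequence $\|b-b_Q\|_{L^{\tilde s}(\tilde Q_k)} \le Ck\|b\|_{BMO}|\tilde Q_k|^{1/\tilde s}$, controls $(b-b_Q)Th$; a parallel application of (1.2) to $(b-b_Q)h$, using $\|(b-b_Q)h\|_{L^1}\le C\|b\|_{BMO}|Q|w(Q)^{-1/p}$, controls $T((b-b_Q)h)$. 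The $A_q^+$ hypothesis (2.1) yields $w(\tilde Q_k)\le C2^{knq}w(Q)$, so after dividing by the $(p,\tilde s,w)$-block normalization $|\tilde Q_k|^{1/\tilde s}w(\tilde Q_k)^{-1/p}$ the coefficient $|\mu_k|$ is bounded by
\[
Ck\|b\|_{BMO}\cdot\bigl(w(\tilde Q_k)/w(Q)\bigr)^{1/p}\cdot 2^{-kn} \le Ck\|b\|_{BMO}\cdot 2^{-kn(1-q/p)}.
\]
Since $q<p$, the exponent $1-q/p>0$ makes $\sum_k|\mu_k|^{\bar p}$ summable, and $\tilde s > r_w p/(r_w-1)$ ensures that the $g_k$ are legitimate $(p,\tilde s,w)$-blocks.

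With the block-level estimate in hand, part (i) follows by concatenating block expansions: writing $T_b a_j = \sum_k \mu_{j,k}g_{j,k}$ for each atom and using linearity gives $T_b f = \sum_{j,k}\lambda_j\mu_{j,k}g_{j,k}$, and subadditivity of $|\cdot|^{\bar p}$ produces (2.6); convergence of the series in $BH^{p,\tilde s}_w$ and $w$-a.e.\ then comes from Fact 1.5 applied to the combined expansion, which also yields (2.1). Part (ii) uses (1.3) to produce the pointwise dominant $\sum_{j,k}|\lambda_j\mu_{j,k}|\,|g_{j,k}|$ of $|T_b f|$, and the hypothesis $w\in\mathcal{P}$ repackages $T_b f$ itself as a $BH^{p,\tilde s}_w$ series by cutting against the disjoint partition $\{Q_i\}$: each $T_b f\cdot\chi_{Q_i}$ becomes a multiple of a block on $Q_i$ whose $L^{\tilde s}$-norm is controlled by the absolute dominant, and the resulting coefficients inherit the required $\bar p$-summability from the concatenated expansion. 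The main obstacle is the annular bookkeeping: the logarithmic $BMO$ growth $k$, the $2^{-kn}$ spatial decay, and the weight distortion $(w(\tilde Q_k)/w(Q))^{1/p}$ must balance to give the summable rate $k\cdot 2^{-kn(1-q/p)}$, and this balance is precisely what the hypotheses $q<p$, $\tilde s<s$, and $\tilde s>\max\{p/q,r_wp/(r_w-1)\}$ are designed to enforce.
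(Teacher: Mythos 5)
Your reduction to a block-level bound and the direct annular decomposition $T_b h = \sum_k \mu_k g_k$ into $(p,\tilde s,w)$-blocks on the dilates $\tilde Q_k$ is correct, and it is a genuinely different route from the paper's. The paper instead verifies that $T_b h$ is a $(p,\tilde s,q,w,\varepsilon)$-molecule in the sense of Definition 3.8 --- estimating the quantities $A_{\tilde s}$ and $B_{\tilde s}$ of (3.14)--(3.15) --- and then invokes the molecular characterization of $BH^{p,\tilde s}_w$ (Proposition 3.9) to obtain the block series abstractly. Your argument unwinds that machinery and builds the block series by hand. The ingredients are the same (the commutator split (3.8), John--Nirenberg via Lemma 3.5, the size condition (1.2), the doubling $w(\tilde Q_k)\lesssim 2^{knq}w(Q)$ coming from $A_q^+$), and the critical coefficient decay $k\cdot 2^{-kn(1-q/p)}$ is exactly the balance the paper's molecular exponents $c=1-q/p-\varepsilon$, $d=1-1/s-\varepsilon$ encode. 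What the molecular route buys is modularity (Proposition 3.9 is reused across Theorems 2.3 and 2.7); what your route buys is transparency and self-containedness.

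There is, however, a genuine gap in your treatment of part (ii). The correct chain is: (1.3) gives $|T_b f| \le \sum_j|\lambda_j|\,|T_b a_j|$ only $w$-a.e.; the hypothesis $w\in\mathcal{P}$ is used solely to upgrade $w$-a.e.\ to a.e.\ through Lemma 3.7(ii); and then the lattice property of block spaces, Lemma 3.6(i), is what says a function dominated a.e.\ by an element of $BH^{p,\tilde s}_w$ is itself in $BH^{p,\tilde s}_w$ with controlled quasi-norm. Your proposed substitute --- cutting $T_b f$ against the disjoint cubes $\{Q_i\}$ furnished by $\mathcal{P}$ and declaring each $T_bf\cdot\chi_{Q_i}$ a multiple of a block on $Q_i$ --- does not yield the required $\bar p$-summability of the coefficients: the $Q_i$ bear no geometric relation to the cubes supporting your $g_{j,k}$, so there is no control on $\|T_b f\|_{L^{\tilde s}(Q_i)}\,|Q_i|^{-1/\tilde s}w(Q_i)^{1/p}$ in terms of the concatenated expansion, and the sum over $i$ can diverge. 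You need Lemma 3.6(i), not a cover-by-$\{Q_i\}$ argument; the partition $\{Q_i\}$ plays a purely measure-theoretic role here.

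A smaller remark on part (i): Fact 1.5 gives convergence of a block expansion in $H^p_w$ and $w$-a.e., not in $BH^{p,\tilde s}_w$; the convergence of $\sum_{j,k}\lambda_j\mu_{j,k}g_{j,k}$ in $BH^{p,\tilde s}_w$ and the well-definedness of the extension should instead be obtained from Lemma 3.6(ii)--(iii) (subadditivity and completeness), as the paper does.
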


\par   In particularly, we have from Theorem 2.7 and Fact 1.7 that

   \begin{corollary}\label{co.4}
    Let $0<p \leq 1 $ and      $ w\in A _{q,r}  $ with $ 0<q <p  $  and  $ 1<r <\infty  $. Let $b\in BMO$.
   Suppose that  $T$ is defined for every $(p,s,w)$-block $h$ with $\max\{ p/q, rp/(r-1) \} < s \leq \infty $ and
       satisfies (1.2).  Suppose that
        $$ \|T h\|_{L^{s}}\leq C \| h\|_{L^{s}}, $$
        and there is a $\tilde{s}$, which satisfies  $\max\{ p/q, rp/(r-1) \} < \tilde{s} < s  \leq \infty $ and $s/(s+1) \leq \tilde{s}  <s $, such that
      $$ \|T h\|_{L^{ \tilde{s}}}\leq C \| h\|_{L^{\tilde{s}}}. $$

 \par    (i)  If  $ T $  is a linear operator, then
  $T_b$ has an unique bounded extension (still denoted by $T_b$) from $H^{p}_{w}$
    to itself that satisfies (2.1)   in $H^p_w$ and $w$-a.e., and (2.5)          for all  $ f= \sum_{j=1}^{\infty}\lambda_ja_j \in H^{p}_{w}  $.

   \par (ii) If $ T $ is a sublinear operator  whose commutator satisfies (1.3)  for    $f=\sum\lambda_ja_j \in H^{p}_{w}  $, and $w\in P$,
 then $T_b$ has a bounded extension  from $H^{p}_{w}$    to itself  that satisfies  (2.5).
   \end{corollary}

\par \subsection  { Applications}
Let  $T$ be a operator satisfying the size condition
\begin{equation}\label{7.11}
|Tf(x)|\leq C \int_{{\bf R}^n}\frac{|f(y)|}{|x-y|^n}dy, ~~~~ x\notin
{\rm supp}f,
\end{equation}
for $f$ with compact support set.

It is easy to check that  (2.7) implies (1.2), (see also Lemma 7.1 in \cite{L}).
Therefore, if condition (1.2) replaced by condition (2.7),  Theorem 2.1,2.3,2.5 and 2.7 hold.
While it is easy to see the condition (2.7) is satisfied by many classical operators in harmonic analysis, (see also  \cite{SW}).
 Thus, Theorem 2.1,2.3,2.5 and 2.7 are applied to these operators.

 \par  \subsubsection{ }  For the commutators that are known to be bounded  on $L^p$ with $1<p<\infty$, we can apply Theorem 2.1 and Theorem 2.3.

   \par These commutators include the following.

\par $\bullet$ {\bf The commutator of Hilbert transform $H$}
  $$H_bf(x)= {\rm p.v.~}\int_{{\bf R}}\frac{(b(x)-b(y))f(y)}{x-y}dy. $$
The operator $H$ is defined if we replace $b(x)-b(y)$ by 1 in the above definition of the commutator $H_b$. The following operators such as $R^j$, et al, are defined by the same way.

$H_b$ is  bounded on $L^p$ and $L^p_w$ with  $1<p<\infty$ and $w\in A_p$, (see \cite{Blo}). But $H_b$ fails to be   bounded from $L^1$ to weak-$L^1$, (see \cite{Pal}).  $H_b$ also fails to be   bounded from $H^1$ to $L^1$, (see \cite{P}).
While $H_b$ is  bounded from a subspace $H_b^p$ of $H^p$  to $L^p$ for $\frac{1}{2} <p\leq 1$, (the proof is similar to the case of $p=1$  in \cite{P}, see also \cite{Alv}).

  \par $\bullet$ {\bf The commutator of Riesz transform $R^j$}
$$R^j_b={\rm p.v.~}\int_{{\bf R}^n}  \frac{|x_j-y_j|}{|x-y|^{n+1}} (b(x)-b(y)) f(y)dy , j=1,2,\cdots,n.$$

$R^j_b$ is  bounded on $L^p$ and $L^p_w$ with  $1<p<\infty$ and $w\in A_p$, (see \cite{SeTo}).  $R^j_b$ is also bounded from  $H_b^p$   to $L^p$ for $\frac{n}{n+1} <p\leq 1$, (the proof is similar to the case of $p=1$  in \cite{P}, see also \cite{Alv}).

  \par $\bullet$ {\bf The commutator of the rough singular integral operators $T^{\Omega}$}
   $$T^{\Omega}_b f(x)={\rm p.v.~}\int_{{\bf R}^n} \frac{\Omega(\frac{x-y}{|x-y|})}{|x-y|^n} (b(x)-b(y))  f(y)dy, $$
where $ \Omega \in L^\infty (S^{n-1})$ and $\int _{{\bf S}^{n-1}}\Omega(u)d\sigma(u)=0$.

 $T^{\Omega}_b $ is  bounded on $L^p$ and $L^p_w$ with  $1<p<\infty$ and $w\in A_p$, (see \cite{ABKP}).

\par $\bullet$ {\bf The commutator of the C.Fefferman type strongly singular multiplier operator $T^F$}
$$T^F_b f(x)={\rm p.v.~} \\ \int_{{\bf R}^n}\frac{e^{i|x-y|^{-\lambda}}}{|x-y|^n} \chi_E(|x-y|) (b(x)-b(y))  f(y)dy, $$
where $0<\lambda<\infty$ and $\chi_E$ is the characteristic function of the unit interval $E=(0,1)\subset {\bf R} $.

  $T^F_b $ is  bounded on $L^p$ and $L^p_w$ with  $1<p<\infty$ and $w\in A_p$, (see \cite{GHST}).

\par $\bullet$  {\bf The commutator of Calder\'{o}n-Zygmund  operator $T^{CZ}$}

Let  $T^{CZ} $ is a Calder\'{o}n-Zygmund operator satisfying
 $$
T^{CZ}f(x) = \int k(x,y)f(y)dy,~~ x\in supp (f),
$$
 for any $f\in L^p$ with compact support for some $1<p<\infty$, where $k(x,y)$ is a standard Calder\'{o}n-Zygmund kernel   which satisfies
$$
|k(x, y)| \leq C
\frac{1}{
|x - y|^{n} },
$$
$$
|k(x, y)-k(x, z)| +|k(y, x)-k(z, x)| \leq C
\frac{|y-z|^{\delta}}{
|x - z|^{n+\delta} }
$$
for  $2 |y - z| < |x - y|$ and some $ 0<\delta \leq 1$. We define the  commutator of Calder\'{o}n-Zygmund  operator
$$
T^{CZ}_b f(x) = \int k(x,y)(b(x)-b(y))f(y)dy,~~ x\in supp (f),
$$
 for any $f\in L^p$ with compact support for some $1<p<\infty$.

$T^{CZ}_b $ is  bounded on $L^p$ and $L^p_w$ with  $1<p<\infty$ and $w\in A_p$, (see  \cite{ABKP} and \cite{SeTo}).
 $T^{CZ}_b$ is also bounded from a subspace $H_b^p$ of $H^p$  to $L^p$ for $\frac{n}{n+\delta} <p\leq 1$,  see  \cite{Alv}.

\par $\bullet$ {\bf The commutator of the partial sun operator $C^{\xi}$ of Fourier series}
  $$ C^{\xi}_bf(x)=\int_{{\bf R} }\frac{e^{2\pi i \xi y}}{x-y} (b(x)-b(y))  f(y)dy, $$
 where $\xi \in {\bf R}$.

 $C_b$ is  bounded on $L^p$ and $L^p_w$ with  $1<p<\infty$ and $w\in A_p$, (see \cite{SeTo}).

\par $\bullet$ {\bf  The commutator of Bochner-Riesz means    $B^{(n-1)/2,R}$ at the  critical index}
$$
B^{(n-1)/2,R}_b f(x)=\int_{{\bf R}^n}  K_R^{(n-1)/2}(x-y)(b(x)-b(y))  f(y)dy, 1<R<\infty,
$$
where $K^{(n-1)/2}_R(x) =
[(1-|\xi/R|^2)^{(n-1)/2}_+]~\breve{}~(x),$ and $\check{g}$ denotes the
inverse Fourier transform of $g$.

  Combining the results of  Alvarez  et al  in \cite{ABKP} and   Shi et al   in \cite{SS}, we know that $ B_b^{(n-1)/2,R} $ is bounded on $L^p$ with $1<p<\infty$.

\par $\bullet$ {\bf The commutators of  pseudo-differential operator whose symbols in $ {\mathcal{S}}^{n(\varrho-1)}_{\varrho,\delta}$ with $0<\varrho\leq 1, 0\leq \delta<1$.}

 Let $m\in {\bf R},0<\varrho\leq 1, 0\leq \delta<1$.
A symbol in $ {\mathcal{S}}^{m}_{\varrho,\delta}$ will be a smooth function $p(x, \xi)$ defined on ${\bf R}^n\times {\bf R}^n$,  satisfying the estimates
$$
D^{\alpha}_xD^{\beta}_{\xi}p(x, \xi)\leq C_{\alpha,\beta}(1+|\xi|)^{m-\varrho |\beta| +\delta|\alpha|}.
$$
As usual, $\mathcal{L}^{m}_{\varrho,\delta}$
will denote the class of operators with symbol in $ {\mathcal{S}}^{m}_{\varrho,\delta}$.
 Let $L\in \mathcal{L}^{m}_{\varrho,\delta}$.
 Following \cite {AH},  $L$ has a distribution kernel $k(x, y)$  defined by
the oscillatory integral
 $$
 k(x,y)=(2\pi)^{-n} \int e^{i(x-y)\cdot \xi} p(x, \xi)d\xi,
 $$
which satisfies, if $m+M+n>0$  for some $M\in {\bf Z}_+$, that
\begin{equation}\label{2.1}
D^{\alpha}_x D^{\beta}_{y}k(x,y )\leq C|x-y|^{-(m+M+n)/\varrho },~x\neq y.
\end{equation}
From (2.3), it is easy to see that
the  pseudo-differential operator $L$ satisfy (2.1) when $M=0$ and $ m=n(\varrho-1)$.  At the same time, $1<p<\infty, 0<\varrho\leq 1$ and $ 0\leq \delta<1$ imply $n(\varrho-1)\leq n(\varrho-1)|\frac{1}{p}-\frac{1}{2}|+ \min \{0,\frac{n(\rho-\delta)}{2}\}$, by Theorem 3.4 in \cite {AH}, we knew that
$L \in  {\mathcal{L}}^{n(\varrho-1)}_{\varrho,\delta}$ is bounded on $L^p$  with $1<p<\infty$. Let $L \in  {\mathcal{L}}^{n(\varrho-1)}_{\varrho,\delta}$,  we define the commutator
$$
L_b f(x) = \int k(x,y) (b(x)-b(y))  f(y)dy
$$
for $f\in C_0^{\infty}$.

  $L_b $ is  bounded on $L^p$ and $L^p_w$ with  $1<p<\infty$ and $w\in A_p$, (see \cite{ABKP}).

\par Denote  $T_b^{1}$  the above commutators $H_b, R^j_b,T^{\Omega}_b$, $T^F_b,  T^{CZ}_b, C^{\xi}_b, B^{(n-1)/2,R}_b$ and $L_b$.

 As we see from above, each  $T_b^{1}$ is bounded  on $L^p$ for all  $1<p<\infty$. But $T_b^{1}$ may fail to be   bounded from $L^1$ to weak-$L^1$.  $T_b^{1}$ may also fail to be   bounded from $H^1$ to $L^1$.
 Some of $T_b^{1}$  are bounded
   from a subspace of $H^p$ to $L^p$ for some $0<p\leq 1$. But, it seems that
   %every $T_b^{1}$ is not  bounded from $H^p$ to $L^p$ for all $0<p\leq 1$, and
   every $T_b^{1}$ has still no norm estimate for $0<p$ small  enough.

 We have from Theorem 2.1(i) and Theorem 2.3(i) that
 \begin{theorem}\label{th_5}
Let $0<p <\infty ,   w\in A _{q,r}  $ with $ 0<q <p  $  and  $ 1<r <\infty  $, and $b\in BMO$.

\par (i) If   $  1   < s \leq \infty $ and $   rp/(r-1)  \leq s  $,
 then each $T_b^{1}$  extends to a  bounded operator from $BH^{p,s}_{w}$ to $L^{p}_{w}$  that satisfies (2.2).

\par  (ii) If  $\max\{ p/q, rp/(r-1) \} < s < \infty $,
 then each $T_b^{1}$    extends to  a  bounded operator from $BH^{p,s}_{w}$ to itself  that satisfies (2.4).
 \end{theorem}

 \par In particularly, noticing Fact 1.7,  we have
\begin{corollary}\label{th 5}
Let $0<p \leq 1 ,$ and $  w\in A _{q,r}  $ with $ 0<q <p  $  and  $ 1<r <\infty  $, and $b\in BMO$.
Then,
\par (i) each $T_b^{1}$     extends  to  a  bounded operator from $H^{p}_{w}$ to $L^{p}_{w}$  that satisfies (2.3),
 \par (ii) each $T_b^{1}$     extends  to a  bounded operator  from $H^{p}_{w}$ to itself  that satisfies (2.5).
\end{corollary}

\par $\bullet$ {\bf The commutator of Hardy-Littlewood maximal operator $M$}
$$M_bf(x)= \sup _{x\in Q} \frac{1}{|Q|}\int_{Q}|b(x)-b(y)| |f(y)|dy.$$

$M_b$ is bounded on $L^p$ and $L^p_w$ with  $1<p<\infty$ and $w\in A_p$, (see   \cite{GHST}).   $M$ fails to be  bounded on $H^p$ fot all $0<p\leq 1$, since the  vanishing properties of $H^p$ functions,  (see \cite{S3},p129).

 We have from Theorem 2.1(ii) and Theorem 2.3(ii) that
 \begin{theorem}\label{th_5}
Let $0<p <\infty ,   w\in A _{q,r}  \cap  P$ with $ 0<q <p  $  and  $ 1<r <\infty  $,   and $b\in BMO$.

\par (i) If   $  1   < s \leq \infty $ and $   rp/(r-1)  \leq s  $,
 then   $M$  extends to a  bounded operator from $BH^{p,s}_{w}$ to $L^{p}_{w}$  that satisfies (2.2).

\par  (ii) If  $\max\{ p/q, rp/(r-1) \} < s < \infty $,
 then each $M$    extends to  a  bounded operator from $BH^{p,s}_{w}$ to itself  that satisfies (2.4).
 \end{theorem}

 \par In particularly, noticing Fact 1.7,  we have
\begin{corollary}\label{th 5}
Let $0<p \leq 1 ,$ and $  w\in A _{q,r}  \cap  P$ with $ 0<q <p  $  and  $ 1<r <\infty  $,  and $b\in BMO$.
Then,
\par (i)    $M$     extends  to  a  bounded operator from $H^{p}_{w}$ to $L^{p}_{w}$  that satisfies (2.3),
\par (ii)    $M$     extends  to  a  bounded operator  from $H^{p}_{w}$ to itself  that satisfies (2.5).
\end{corollary}

\par $\bullet$ {\bf The commutator of the maximal operator of  rough singular integral $\tilde{T}^{\Omega, *}$}
$$\tilde{T}^{\Omega ,*}_b f(x)=\sup_{j\in {\bf Z}} \int_{|x-y|>2^j} \frac{\Omega(\frac{x-y}{|x-y|})}{|x-y|^n}  (b(x)-b(y))  f(y)dy $$
 with $ \Omega \in L^\infty (S^{n-1}),\int _{{\bf S}^{n-1}}\Omega(u)d\sigma(u)=0$,
 and
 $$\sup _{\xi\in {\bf S}^{n-1} }
  \int _{{\bf S}^{n-1}}|\Omega(u)|\left( {\rm log }\frac{1}{|\xi\cdot u|} \right)^{1+\alpha}d\sigma(u)$$
for all $\alpha>2$.

$ \tilde{T}^{\Omega, *}_b $ is bounded on $L^p$ with $1<p<\infty$, (see  \cite{CD}). $ \tilde{T}^{\Omega, *}_b $ fails to be  bounded on $H^p$ fot all $0<p\leq 1$, since the  vanishing properties of $H^p$ functions,  (see \cite{S3},p129).

We have from Theorem 2.1(ii) and Theorem 2.3(ii) that

\begin{theorem}\label{th 6}
Let $0<p <\infty ,  w\in A _{q,r}   $ with $ 0<q <p  $  and  $ 1<r <\infty  $,  and $b\in BMO$.

\par (i) If    $  1   < s \leq \infty $ and $   rp/(r-1)  \leq s  $, then  $ \tilde{T}^{\Omega, *}_b $   extends  to a  bounded operator from $BH^{p,s}_{w}$ to $L^{p}_{w}$ that satisfies (2.2).

\par (ii)   If $ w\in  P$ and  $\max\{ p/q, rp/(r-1) \} < s < \infty $,    then
   $ \tilde{T}^{\Omega, *}_b $   extends  to a bounded operator  from  $BH^{p,s}_{w}$ to itself  that satisfies (2.4).
 \end{theorem}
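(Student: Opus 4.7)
The plan is to deduce Theorem 2.11 directly from Theorems 2.1(ii) and 2.3(ii) applied to $T = \tilde{T}^{\Omega,*}$. The $L^s$-boundedness $\|\tilde{T}^{\Omega,*}_b h\|_{L^s}\leq C\|b\|_{BMO}\|h\|_{L^s}$ for $1<s<\infty$ is already provided by the cited result of \cite{CD}, so it remains to verify the two structural hypotheses (1.2) and (1.3) demanded by the general framework.

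First I would check the size condition (2.7), from which (1.2) follows via Lemma 7.1 of \cite{L} quoted just after (2.7). Since $\Omega\in L^{\infty}(S^{n-1})$, for every $f$ with compact support and every $x\notin\mathrm{supp}(f)$, every truncation $\int_{|x-y|>2^j}$ of the kernel is dominated pointwise by $\|\Omega\|_{\infty}\int|x-y|^{-n}|f(y)|\,dy$. Taking the supremum over $j\in\mathbb{Z}$ preserves this bound, giving
\[
|\tilde{T}^{\Omega,*}f(x)|\leq C\int_{\mathbb{R}^n}\frac{|f(y)|}{|x-y|^n}\,dy,\qquad x\notin\mathrm{supp}(f),
\]
which is (2.7), hence (1.2) on every $(p,s,w)$-block.

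Next I would verify the commutator-sublinearity condition (1.3). Writing $T^{\Omega,j}_b g(x):=\int_{|x-y|>2^j}\frac{\Omega((x-y)/|x-y|)}{|x-y|^n}(b(x)-b(y))g(y)\,dy$, each $T^{\Omega,j}_b$ is linear in $g$, so for any $f=\sum_k\lambda_k a_k\in BH^{p,s}_w$ with $(p,s,w)$-blocks $a_k$,
\[
|T^{\Omega,j}_b f(x)|=\Bigl|\sum_k\lambda_k T^{\Omega,j}_b a_k(x)\Bigr|\leq \sum_k|\lambda_k|\,|T^{\Omega,j}_b a_k(x)|.
\]
Taking the supremum in $j$ and using that $\sup_j\sum_k c_{k,j}\leq \sum_k\sup_j c_{k,j}$ for nonnegative $c_{k,j}$, we obtain
\[
|\tilde{T}^{\Omega,*}_b f(x)|=\sup_j|T^{\Omega,j}_b f(x)|\leq\sum_k|\lambda_k|\sup_j|T^{\Omega,j}_b a_k(x)|=\sum_k|\lambda_k|\,|\tilde{T}^{\Omega,*}_b a_k(x)|,
\]
which is exactly (1.3) (the $w$-a.e.\ clause is automatic since the bound holds everywhere the integrals converge, which is $w$-a.e.\ by Fact 1.5).

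With (1.2), (1.3) and the $L^s$-boundedness of $\tilde{T}^{\Omega,*}_b$ in place, part (i) of Theorem 2.11 is an immediate invocation of the sublinear case Theorem 2.1(ii) in the range $\max\{1,r_wp/(r_w-1)\}<s<\infty$, yielding (2.2). Part (ii) is the corresponding invocation of Theorem 2.3(ii) in the range $\max\{p/q,r_wp/(r_w-1)\}<s<\infty$; note that Theorem 2.3(ii) requires the additional hypothesis $w\in\mathcal{P}$, which is exactly the extra assumption imposed in Theorem 2.11(ii), yielding (2.4). The main obstacle is checking (1.3), and the sup-of-linear-integrals representation of $\tilde{T}^{\Omega,*}$ disposes of it without further decomposition of the blocks; no new harmonic-analytic estimate beyond the cited $L^s$ bound of \cite{CD} is needed.
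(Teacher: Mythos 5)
Your overall strategy matches the paper's — verify (2.7) hence (1.2), cite the $L^s$-boundedness from \cite{CD}, then invoke Theorems 2.1(ii) and 2.3(ii) — but there is a genuine gap in your verification of (1.3). You write
\[
|T^{\Omega,j}_b f(x)|=\Bigl|\sum_k\lambda_k T^{\Omega,j}_b a_k(x)\Bigr|
\]
for $f=\sum_k\lambda_k a_k$ and justify the equality by saying "each $T^{\Omega,j}_b$ is linear in $g$." Linearity only gives this for finite sums; for the infinite series defining an element of $BH^{p,s}_w$, the interchange of $T^{\Omega,j}_b$ with the infinite sum requires a convergence argument. Your appeal to Fact 1.5 does not supply it: Fact 1.5 says the block series $\sum_k\lambda_k a_k$ converges to $f$ $w$-a.e., but it says nothing about whether $\sum_k\lambda_k T^{\Omega,j}_b a_k(x)$ converges, nor whether it equals $T^{\Omega,j}_b f(x)$.

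The paper closes this gap precisely and you should do the same: the truncated commutators $T^{\Omega,j}_b$ are \emph{linear} operators satisfying (1.2) and bounded on $L^s$, so they fall under Theorem 2.1(i), which delivers exactly the identity (2.1), i.e.\ $T^{\Omega,j}_b(\sum_k\lambda_k a_k)(x)=\sum_k\lambda_k T^{\Omega,j}_b a_k(x)$ holding $w$-a.e.\ for each fixed $j$. With that identity in hand, the triangle inequality and the elementary bound $\sup_j\sum_k c_{k,j}\le\sum_k\sup_j c_{k,j}$ (for $c_{k,j}\ge 0$) give (1.3) for $\tilde{T}^{\Omega,*}_b$, after which your invocations of Theorems 2.1(ii) and 2.3(ii) go through exactly as you state. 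So the repair is small — insert the appeal to Theorem 2.1(i) for the truncated operators — but as written the middle step of your (1.3) verification is not justified.
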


\par In particularly, noticing Fact 1.7,  we have
\begin{corollary}\label{th 5}
Let $0<p \leq 1 $ and $   w\in A _{q,r}   $ with $ 0<q <p  $  and  $ 1<r <\infty  $, and $b\in BMO$. Then,

\par (i) $ \tilde{T}^{\Omega, *}_b $ extends  to a  bounded operator  from $H^{p}_{w}$ to $L^{p}_{w}$ that satisfies (2.3),

\par (ii)  if  $ w\in  P$, $ \tilde{T}^{\Omega, *}_b $ extends  to a  bounded operator from $H^{p}_{w}$ to itself that satisfies (2.5).
\end{corollary}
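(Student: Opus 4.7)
The plan is to deduce this corollary directly from Theorem 2.11 by specializing to the range $0<p\leq 1$ and then invoking Fact 1.6 to identify the block space with the weighted Hardy space.

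First I would verify that $\tilde{T}^{\Omega,*}$ fits the framework of Theorem 2.11. Since $\Omega\in L^\infty(S^{n-1})$, for any function $f$ with compact support and any $x\notin\mathrm{supp}\,f$ one has the pointwise bound
\[
|\tilde{T}^{\Omega,*}f(x)|\leq C\int_{\mathbb{R}^n}\frac{|f(y)|}{|x-y|^n}\,dy,
\]
so by the remark following (2.7) (Lemma 7.1 in \cite{L}), $\tilde{T}^{\Omega,*}$ satisfies the size condition (1.2) on every $(p,s,w)$-block. The sublinearity of the supremum of positive integrals gives
\[
\tilde{T}^{\Omega,*}_b\!\Bigl(\sum_j\lambda_j a_j\Bigr)(x)\leq \sum_j|\lambda_j|\,\tilde{T}^{\Omega,*}_b a_j(x),
\]
so condition (1.3) holds. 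Finally, the result of \cite{CD} quoted in the paper gives $\|\tilde{T}^{\Omega,*}f\|_{L^{s}}\leq C\|f\|_{L^{s}}$ for every $1<s<\infty$, so in particular both $L^s$ and $L^{\tilde s}$ estimates required by Theorem 2.11 hold for any admissible pair $1<\tilde s<s<\infty$.

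With these verifications in hand I would fix $0<p\leq 1$ and $w\in A_q^+$ with $0<q<p$. Since $p\leq 1\leq q\cdot(p/q)$, I can choose $s,\tilde s$ with $\max\{p/q,r_wp/(r_w-1)\}<\tilde s<s\leq\infty$ (using $\tilde s<\infty$ if needed for part (ii), and simply any such $s$ for part (i)). Theorem 2.11(ii), applied in the sublinear setting, asserts that $\tilde{T}^{\Omega,*}_b$ extends to a bounded operator $BH^{p,s}_w\to L^p_w$, and, when $w\in\mathcal{P}$, to a bounded operator $BH^{p,s}_w\to BH^{p,\tilde s}_w$, satisfying the norm inequalities (2.2) and (2.6) respectively.

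The last step is to translate block-space statements into Hardy-space statements. Since $0<p\leq 1$, Fact 1.6 yields $BH^{p,s}_w=H^p_w$ (with equivalent quasinorms) for every admissible $s$, and likewise $BH^{p,\tilde s}_w=H^p_w$. Thus (2.2) becomes the bound (2.3) from $H^p_w$ to $L^p_w$, proving (i); and (2.6) becomes the bound (2.5) from $H^p_w$ to itself, proving (ii) under $w\in\mathcal{P}$. The main obstacle, if any, is simply checking the sublinearity condition (1.3) for the maximal commutator $\tilde{T}^{\Omega,*}_b$, which follows from the elementary sub-additivity of suprema of absolute-value integrals; once this is noted, the corollary is a purely formal consequence of Theorem 2.11 and Fact 1.6.
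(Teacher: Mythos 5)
The overall plan — invoke the corresponding block-space theorem, then apply Fact 1.6 to identify $BH^{p,s}_w$ with $H^p_w$ for $0<p\leq1$ — is the same as the paper's, and that part is fine. However, the proposal has two substantive problems.

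First, the theorem you invoke is misidentified and misdescribed. Theorem 2.11 concerns the Hardy--Littlewood maximal commutator $M_b$, not $\tilde{T}^{\Omega,*}_b$; the relevant statement is Theorem 2.13. Moreover, what you describe — boundedness from $BH^{p,s}_w$ to $BH^{p,\tilde s}_w$ via (2.6), derived from $L^s$ and $L^{\tilde s}$ estimates for the \emph{base} operator $T$ — is the Theorem 2.5/2.7 route (category (2) in the paper, used for Theorem 2.15). The paper places $\tilde{T}^{\Omega,*}_b$ in category (1): the commutator itself is known to be bounded on $L^s$ by \cite{CD} (your sentence attributes the $L^s$ bound to the base operator, which is not what \cite{CD} is cited for), so Theorem 2.13 is proved via Theorem 2.1(ii) and Theorem 2.3(ii), giving $BH^{p,s}_w\to L^p_w$ by (2.2) and $BH^{p,s}_w\to BH^{p,s}_w$ by (2.4). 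For $p\leq1$ the two routes collapse to the same Hardy-space conclusion through Fact 1.6, so your route would \emph{in principle} also work, but you should be explicit that you are taking a different path than the one the numbering of the corollary points to, and you should source the $L^s$ boundedness of the base operator correctly.

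Second, and more seriously, your verification of the sublinearity hypothesis (1.3) is a genuine gap. You write that ``the sublinearity of the supremum of positive integrals'' gives $\tilde{T}^{\Omega,*}_b(\sum_j\lambda_j a_j)\leq\sum_j|\lambda_j|\,\tilde{T}^{\Omega,*}_b a_j$. The integrands are not positive — the kernel $\Omega\bigl(\tfrac{x-y}{|x-y|}\bigr)(b(x)-b(y))$ is signed — and the real issue is not sublinearity of the supremum but the interchange of the infinite sum $\sum_k\lambda_ka_k$ with the truncated integral for each fixed truncation level $j$. The paper's proof of Theorem 2.13 handles this by passing to the linear truncated operators: their commutators are $L^s$-bounded and linear, so Theorem 2.1(i) applies and yields (2.1) $w$-a.e.\ for each truncation; only then does one take $\sup_j$ over these countably many $w$-a.e.\ identities and apply the triangle inequality to obtain (1.3). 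That step is the crux, and your proposal asserts it without justification. As written, the proof does not establish (1.3) and therefore cannot legitimately invoke any of Theorems 2.1(ii), 2.3(ii), 2.5(ii), or 2.7(ii).
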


\par  \subsubsection{ } For the commutators that may not be known whether they are bounded on $L^p$ for $1<p<\infty$ but know the associated  operators are bounded  on $L^p$ for $1<p<\infty$, we can apply Theorem 2.5 and 2.7.

 \par  These commutators include the following.

\par $\bullet$  {\bf The commutator of the maximal Hilbert transform $H^*$}
 $$H_b^*f(x)= \sup_{\varepsilon>0} \int_{|x-y|>\varepsilon}\frac{(b(x)-b(y)) f(y)}{x-y}dy.$$

   \par $\bullet$  {\bf The commutator of the maximal Riesz transform $R^{j,*}$}
   $$R^{j,*}_b=\sup_{\varepsilon>0} \int_{|x-y|>\varepsilon} \frac{|x_j-y_j|}{|x-y|^{n+1}} (b(x)-b(y))  f(y)dy , j=1,2,\cdots,n.$$

  \par $\bullet$  {\bf The commutator of the maximal  rough singular integral operators $T^{\Omega,*}$}
   $$T^{\Omega, *}_b f(x)=\sup_{\varepsilon>0} \int_{|x-y|>\varepsilon} \frac{\Omega(\frac{x-y}{|x-y|})h(|x-y|)}{|x-y|^n} (b(x)-b(y))   f(y)dy $$
 with $h\in L^\infty ([0,\infty)), \Omega \in L^\infty (S^{n-1})$ and $\int _{{\bf S}^{n-1}}\Omega(u)d\sigma(u)=0$.

\par $\bullet$  {\bf The commutator of the maximal Fefferman type strongly singular multiplier operator $T^{F,*}$}
$$T^{F,*}_b f(x)= \sup_{\varepsilon>0} \int_{|x-y|>\varepsilon}\frac{e^{i|x-y|^{-\lambda}}}{|x-y|^n} \chi_E(|x-y|) (b(x)-b(y))  f(y)dy $$
with $0<\lambda<\infty$ and $\chi_E$ is the characteristic function of the unit interval $E=(0,1)\subset {\bf R} $.

\par $\bullet$   {\bf The commutator of the maximal Calder\'{o}n-Zygmund  operator $T^{CZ*}$}
$$T^{CZ,*}_bf(x)= \sup_{\varepsilon>0} \int_{|x-y|>\varepsilon}k(x,y) (b(x)-b(y))  f(y) dy,$$
 where $k(x,y)$ is a standard Calder\'{o}n-Zygmund  kernel.

\par $\bullet$  {\bf The commutators of  the maximal Carleson operator $C^*$}
$$
C^*_bf(x)=\sup_{\varepsilon>0}\sup_{\xi\in {\bf R}} \left|\int_{|x-y|>\varepsilon}\frac{e^{2\pi i \xi y}}{x-y} (b(x)-b(y))  f(y)dy\right|.
$$

\par $\bullet$  {\bf The commutators of the maximal Bochner-Riesz means    $B^{(n-1)/2,*}$ at the  critical index}
$$B^{(n-1)/2,*}_b f(x)=\sup _{0<R<\infty}|B^{(n-1)/2,R}((b(x)-b) f)(x)|.$$

\par $\bullet$ {\bf The commutator of the polynomial Carleson operator $ C^{d,n}$}
$$ C^{d,n}_b f(x)=\sup_{P\in \mathfrak{P}_{d,n}}  \left| \int_{{\bf R}^n} e^{  i P(x- y)} k(x-y) (b(x)-b(y)) f(y)dy\right|,$$
where $\mathfrak{P}_{d,n}$ is the class of all real-coefficient polynomials in
$n$ variables with no constant term and of degree less than or equal to $d, d\in {\bf N}$, and  $k$ is a suitable  Calder\'{o}n-Zygmund kernel on ${\bf R}^n$.

\par $\bullet$ {\bf The commutator of the maximal   oscillatory singular integrals $ T_{}^{P,*}$}
$$ T_{b}^{P,*}f(x)= \sup_{\varepsilon>0}  \left| \int_{|y|> \varepsilon} e^{  i P(x,y)} k(x-y) (b(x)-b(y)) f(y)dy\right|,$$
where $P:{\bf R}^n\times {\bf R}^n\rightarrow {\bf R}$    is a polynomial
of two variables, and  $k$ is a suitable  Calder\'{o}n-Zygmund kernel on ${\bf R}^n$.

\par
Denote  $T^{2}$  the  operators $H^*, R^{j*},T^{\Omega *}, T^{F*}, T^{CZ*},C^{*}, B^{(n-1)/2,*}, C^{d,1}$ and  $ T_{}^{P,*}$. And denote $T_b^{2}$  the commutator of $T^{2}$, i.e. $H^*_b, R^{j*}_b,T^{\Omega *}_b, T^{F*}_b, T^{CZ*}_b,C^{*}_b, B^{(n-1)/2,*}_b,  C^{d,1}_b$ and  $ T_{b}^{P,*}$.

\par  It is known that each $T^{2}$ is  bounded on $L^p$ with $1<p<\infty$, (see, \cite{St61970} for $H^*$ and $R^{j,*}$,  \cite{DR} for $T^{\Omega, *}$ , \cite{F2} for $T^{F,*}$ , \cite{Mey} for $T^{CZ,*}$, \cite{Carl,Hunt} for $C^{*}$, \cite{SS} for $B^{(n-1)/2,*}$, \cite{Lie} for $C^{d,1}$, and  \cite{KrL } for $ T_{}^{P,*}$).
 But, for every $T_b^{2}$,  it is open problem whether it is bounded on $L^p$ with $1<p<\infty$. While for $0<p\leq 1$, we knew that every $T_b^{2}$ fails to be  bounded on $H^p$, since  a bounded, compactly supported function $f$ belongs to $H^p$ if and only if it satisfies the vanishing  moment conditions $\int x^{\alpha}f(x)dx=0$ for all $|\alpha|\leq n(p^{-1}-1),$ (see \cite{S3},p129).

We have from Theorem 2.5(ii) and Theorem 2.7(ii) that
\begin{theorem}\label{th 6}
Let $0<p <\infty ,   w\in A _{q,r}   $ with $ 0<q <p  $  and  $ 1<r <\infty  $,  and $b\in BMO$.

\par (i) If      $  1   < s \leq \infty $ and $   rp/(r-1)  \leq s  $,
  then   each $T_b^{2}$   extends  to a  bounded operator from $BH^{p,s}_{w}$ to $L^{p}_{w}$ that satisfies (2.2).

\par (ii)  If  $ w\in  P$ and   $\max\{ p/q, rp/(r-1) \}< \tilde{s }<s< \infty $,
  then each $T_b^{2}$   extends  to a bounded from $BH^{p,s}_{w}$ to $BH^{p,\tilde{s}}_{w}$ that satisfies (2.6).
 \end{theorem}

\par  In particularly, noticing Fact 1.7,  we have
\begin{corollary}\label{th 5}
Let $0<p \leq 1 ,   w\in A _{q,r}  $ with $ 0<q <p  $  and  $ 1<r <\infty  $,  and $b\in BMO$. Then,

\par (i) each $T_b^{2}$ extends  to a  bounded operator  from $H^{p}_{w}$ to $L^{p}_{w}$ that satisfies (2.3).

\par (ii)  if  $ w\in  P$, then
each $T_b^{2}$ extends  to a  bounded operator from $H^{p}_{w}$ to itself that satisfies (2.5).
\end{corollary}

For the commutator of the polynomial Carleson operator $C^{d,n}_b$ of high dimensional,  Stein conjectured in \cite{Stein4} and  \cite{Stein5} that $ C^{d,n}$ is bounded on $L^p$ for any $1<p<\infty$, Lie proved the one dimensional case of this conjecture in  \cite{Lie}.
 We have

\begin{corollary}\label{th 8.1}
If  $C_{d,n}$ is bounded on $L^s$ with $1<s<\infty$, then, Theorem 2.15 and Corollary 2.16 hold for $C^{d,n}_b$.

\end{corollary}

  \par $\bullet$ {\bf The commutators of oscillatory singular integrals operators $T^{os}$}
$$T^{os}_bf(x)={\rm p.v.} \int_{{\bf
R}^n}e^{\lambda\Phi(x,y)}K(x,y)\varphi(x,y)(b(x)-b(y))f(y)dy,$$ where $K(x,y)$
is a Calder\'{o}n-Zygmund kernel,  $\varphi \in C_0^\infty({\bf
R}^n\times {\bf R}^n),$  $\lambda \in {\bf R}$, and $ \Phi(x,y)$ is
real-valued and   in the following cases:

\par ~~ (a)  $\Phi(x,y)=(Bx,y)$ is a  real bilinear
form, $ \varphi=1$ and $\lambda=1$,

\par ~~ (b)  $\Phi(x,y)=P(x,y) $ is a polynomial , $ \varphi=1$ and $\lambda=1$,

\par ~~ (c) $\Phi(x,y)$ is a real analytic function on suup($\varphi $).

\par
$T^{os}$ are bounded on $L^p$ with $1<p<\infty$,  (see, \cite{PhongStein} for case (a),  \cite{ RicciStein} for case (b) and
   \cite{Pan} for case (c)). We have from Theorem 2.5(i) and Theorem 2.7(i) that
\begin{theorem}\label{th 6}
Let $0<p <\infty ,  w\in A _{q,r}   $ with $ 0<q <p  $  and  $ 1<r <\infty  $,  and $b\in BMO$.

\par (i) If      $  1   < s \leq \infty $ and $   rp/(r-1)  \leq s  $,
  then   each $T^{os}_b$   extends  to a  bounded operator from $BH^{p,s}_{w}$ to $L^{p}_{w}$ that satisfies (2.2).

\par (ii)  If    $\max\{ p/q, rp/(r-1) \}< \tilde{s }<s< \infty $,
  then each $T^{os}_b$   extends  to a bounded from $BH^{p,s}_{w}$ to $BH^{p,\tilde{s}}_{w}$ that satisfies (2.6).
 \end{theorem}

\par  In particularly, noticing Fact 1.7,  we have
\begin{corollary}\label{th 5}
Let $0<p \leq 1 ,   w\in A _{q,r}  $ with $ 0<q <p  $  and  $ 1<r <\infty  $,  and $b\in BMO$. Then,
\par (i) each $T^{os}_b$ extends  to a  bounded operator  from $H^{p}_{w}$ to $L^{p}_{w}$ that satisfies (2.3),
\par (ii) each $T^{os}_b$ extends  to a  bounded operator from $H^{p}_{w}$ to itself that satisfies (2.5).
\end{corollary}

\section
 { Proofs of theorems  }
Let us first state some lemmas.

\begin{lemma}\label{lem_3}

Let $0<p <\infty ,  w\in  RH_r  $ with   $ 1<r <\infty  $,  and $  rp/(r-1) \leq s \leq \infty $. Then, for any cube $ Q$,
\begin{equation}\label{2.2}
\left(\int_{Q}w^{ s/(s-p)}\right)^{(s-p)/s} \leq C |Q|^{-p/s} w(Q ).
 \end{equation}
\end{lemma}

\par See \cite{L} for a proof of Lemma 3.1.

\begin{lemma}\label{lem_4}
Let $b\in BMO,  Q$ be a cube.
\par (i) Let $0<p <\infty ,  w\in  RH_r  $ with   $ 1<r <\infty  $,
  we have
\begin{equation}\label{2.2}
\int_{ 2^{i}Q  }|b(x)-b_Q|^pw(x)dx  \leq C\|b\|_{BMO}^p (i+1)^p w(2^{i}Q ), ~~ i=0,1,2,\cdots.
 \end{equation}
\par (ii) In particularly, let $1\leq  p<\infty$, we have
\begin{equation}\label{2.2}
\int_{ 2^{i}Q }|b(x)-b_Q|^pdx  \leq C\|b\|_{BMO}^p (i+1)^p |2^{i}Q|, ~~ i=0,1,2,\cdots.
 \end{equation}
\end{lemma}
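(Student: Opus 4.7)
The plan is to add and subtract $b_{2^{i+1}Q}$ inside the integrand, writing
\[
b(x) - b_Q = \bigl(b(x) - b_{2^{i+1}Q}\bigr) + \bigl(b_{2^{i+1}Q} - b_Q\bigr),
\]
and use the elementary inequality $(|a|+|b|)^p \leq C_p(|a|^p+|b|^p)$ to split the integral into two pieces. The first piece will be handled by H\"older's inequality combined with Lemmas~3.3 and 3.4, while the second is essentially a constant on $2^{i+1}Q$ and is controlled by a standard telescoping argument.

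For the telescoping piece, I would chain through the dyadic ancestors $Q \subset 2Q \subset \cdots \subset 2^{i+1}Q$ and bound
\[
|b_{2^{i+1}Q} - b_Q| \leq \sum_{j=0}^{i}|b_{2^{j+1}Q} - b_{2^{j}Q}| \leq C(i+1)\|b\|_{BMO},
\]
each summand being dominated (up to a factor depending only on the dimension) by $\frac{1}{|2^{j+1}Q|}\int_{2^{j+1}Q}|b-b_{2^{j+1}Q}|\,dx \leq \|b\|_{BMO}$. This contributes at most $C i^p\|b\|_{BMO}^p w(2^{i+1}Q)$ to the total after multiplying by $w(2^{i+1}Q)$.

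For the main piece $\int_{2^{i+1}Q}|b(x) - b_{2^{i+1}Q}|^p w(x)\,dx$, I would apply H\"older's inequality with the conjugate pair $(s/p,\, s/(s-p))$, which is admissible because $s > p$:
\[
\int_{2^{i+1}Q}|b - b_{2^{i+1}Q}|^p w \leq \left(\int_{2^{i+1}Q}|b - b_{2^{i+1}Q}|^s\right)^{p/s}\left(\int_{2^{i+1}Q}w^{s/(s-p)}\right)^{(s-p)/s}.
\]
Since $s \geq 1$, Lemma~3.3 bounds the first factor by $C\|b\|_{BMO}^p|2^{i+1}Q|^{p/s}$; since $r \geq s/(s-p)$ and $w \in RH_r$, Lemma~3.4 bounds the second by $C|2^{i+1}Q|^{-p/s}\, w(2^{i+1}Q)$. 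The cube-measure factors $|2^{i+1}Q|^{\pm p/s}$ cancel, leaving $C\|b\|_{BMO}^p w(2^{i+1}Q)$. Combining this with the telescoped term yields (3.5). The unweighted estimate (3.6) then falls out as the special case $w \equiv 1 \in RH_\infty$, which under the stronger hypothesis $1 \leq p < \infty$ also admits a shorter direct proof using Lemma~3.3 with exponent $p$ immediately after the telescoping step.

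No deep obstacle is anticipated; the proof is essentially an exercise in aligning H\"older exponents with the prescribed reverse H\"older index. The only point requiring attention is checking that $(s/p, s/(s-p))$ meets the hypothesis $r \geq s/(s-p)$ of Lemma~3.4, which is precisely the arithmetic assumption imposed on $r$.
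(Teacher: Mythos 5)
Your proposal is correct and rests on the same three ingredients the paper uses: the telescoping bound $|b_{2^{i+1}Q}-b_Q|\leq C(i+1)\|b\|_{BMO}$, Lemma~3.3 (the $L^s$ $BMO$ bound), and Lemma~3.4 paired with H\"older at exponent $s/p$. The only organizational difference is the order of operations: the paper first establishes the unweighted estimate (3.6) for any exponent $\geq 1$ by the add-and-subtract/telescoping argument, and then obtains (3.5) in one line by applying H\"older directly to $\int_{2^{i+1}Q}|b-b_Q|^p w$ and inserting (3.6) with exponent $s$ for the first factor; you instead split $b-b_Q$ inside the \emph{weighted} integral, apply H\"older only to the oscillation piece, and absorb the constant piece separately, which trades one extra decomposition for not needing to invoke (3.6) as a prepared lemma. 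Both routes cancel the $|2^{i+1}Q|^{\pm p/s}$ factors in the same way and yield the same $(1+i^p)\lesssim i^p$ bound, and your derivation of (3.6) as the $w\equiv 1$ case is sound since the trivial weight lies in every $RH_r$.
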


\begin{proof}[Proof]
  Let us first prove (3.3). When $i=0$, the inequality
  $\int_{  Q }|b(x)-b_Q|^pdx  \leq C\|b\|_{BMO}^p   |  Q|$
  is well known, (see \cite{Gra}).
   Using this and  Minkowski inequality, we have,  for $i>0$,
\begin{eqnarray*}
&&\left ( \int_{ 2^{i}Q  }|b(x)-b_Q|^pdx\right) ^{1/p}
\\
&  \leq &
\left ( \int_{ 2^{i}Q  }|b(x)-b_{2^{i}Q}|^pdx\right) ^{1/p}
+ |b_{2^{i}Q}-b_{ Q}||b_{2^{i}Q}|^{1/p}
\\
&\leq &
|b_{2^{i}Q}|^{1/p}(\|b\|_{BMO}+|b_{2^{i}Q}-b_{ Q}|)
\\
&\leq&
C(i+1)  |b_{2^{i}Q}|^{1/p}\|b\|_{BMO},
 \end{eqnarray*}
since
%\begin{eqnarray*}
$|b_{2^{i}Q}-b_{ Q}|\leq \sum_{j=0}^{i-1}|b_{2^{j+1}Q}-b_{ 2^{j}Q}|\leq Ci  \|b\|_{BMO}.$
 %\end{eqnarray*}
 Thus, (3.3) holds.

 For (3.2),  taking  $\max\{ 1, rp/(r-1) \} \leq s < \infty $, by H\"{o}lder inequality, (3.3) and (3.1),  we have
\begin{eqnarray*}
&&
\int_{ 2^{i}Q  }|b(x)-b_Q|^pw(x)dx
\\
&  \leq &
\left ( \int_{ 2^{i}Q  }|b(x)-b_{Q}|^sdx\right) ^{p/s}
\left(\int_{2^{i}Q  }w^{ s/(s-p)}\right)^{(s-p)/s}
\\
&\leq&
 C\|b\|_{BMO}^p (i+1)^p w(2^{i}Q ).
 \end{eqnarray*}
Thus, (3.2) holds. This concludes the proof of Lemma 3.2.
\end{proof}

\begin{lemma} Let $0<p<\infty, 0<s\leq\infty$ and $w$ be a weight, we have,
\par
(i) if $|f(x)|\leq |g(x)|$, a.e., and $g\in  BH^{p,s}_{w}$, then $f\in  BH^{p,s}_{w}$ and $\| f\|_{BH^{p,s}_{w}} \leq  \| g\|_{BH^{p,s}_{w}}$;
\par
(ii) if  $f_i\in  BH^{p,s}_{w}, i=1,2,\cdots$, then  $\|\sum_{i=1}^{\infty} f_i\|^{\bar{p}}_{BH^{p,s}_{w}} \leq \sum_{i=1}^{\infty} \| f_i\|^{\bar{p}}_{BH^{p,s}_{w}}$;
\par
(iii) $ BH^{p,s}_{w}$ is complete.
\end{lemma}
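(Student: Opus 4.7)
For part (i), the plan is to turn a block decomposition of $g$ into one of $f$ by pointwise rescaling. Write $g=\sum_k \lambda_k h_k$ with each $h_k$ a $(p,s,w)$-block supported on a cube $Q_k$. Define $\varphi(x):=f(x)/g(x)$ where $g(x)\neq 0$ and $\varphi(x):=0$ otherwise; the pointwise hypothesis $|f|\leq |g|$ forces $|\varphi|\leq 1$ a.e.\ and $f=\varphi g$ a.e.\ (on $\{g=0\}$ both sides vanish). Each modified atom $\widetilde{h}_k:=\varphi h_k$ is supported in $Q_k$ and satisfies $\|\widetilde{h}_k\|_{L^s}\leq \|h_k\|_{L^s}\leq |Q_k|^{1/s}w(Q_k)^{-1/p}$, hence is still a $(p,s,w)$-block. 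By Fact~1.5 the series $\sum_k \lambda_k h_k$ converges $w$-a.e.\ to $g$, so multiplying through by the bounded function $\varphi$ gives $f=\sum_k \lambda_k\widetilde{h}_k$ $w$-a.e.\ with the same coefficients. Taking the infimum over all decompositions of $g$ delivers $\|f\|_{BH^{p,s}_{w}}\leq \|g\|_{BH^{p,s}_{w}}$.

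For part (ii), the plan is the standard concatenation of near-optimal decompositions. Given $\varepsilon>0$, for each $i$ choose $f_i=\sum_k \lambda_{i,k}h_{i,k}$ with $\sum_k |\lambda_{i,k}|^{\bar{p}}\leq \|f_i\|^{\bar{p}}_{BH^{p,s}_{w}}+\varepsilon 2^{-i}$. Relabeling the doubly-indexed family $\{(\lambda_{i,k},h_{i,k})\}_{i,k}$ as a single block sequence yields coefficients that are $\bar{p}$-summable, with $\sum_{i,k}|\lambda_{i,k}|^{\bar{p}}\leq \sum_i \|f_i\|^{\bar{p}}_{BH^{p,s}_{w}}+\varepsilon$. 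Fact~1.5 applied to the combined family ensures the resulting series converges in $H^p_w$ and $w$-a.e., and comparison of partial sums identifies the limit with $\sum_i f_i$; by the definition of the quasi-norm, $\|\sum_i f_i\|^{\bar{p}}_{BH^{p,s}_{w}}\leq \sum_i \|f_i\|^{\bar{p}}_{BH^{p,s}_{w}}+\varepsilon$, and letting $\varepsilon\to 0$ finishes the step.

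For part (iii), the plan is to verify the standard completeness criterion for $\bar{p}$-normed spaces: it is enough to show that every sequence $\{f_i\}\subset BH^{p,s}_{w}$ with $\sum_i \|f_i\|^{\bar{p}}_{BH^{p,s}_{w}}<\infty$ has partial sums converging in $BH^{p,s}_{w}$. Repeating the concatenation from (ii) produces a doubly-indexed block decomposition whose coefficients are $\bar{p}$-summable; Fact~1.5 yields a candidate limit $F$ that exists $w$-a.e.\ and lies in $H^p_w$, and the same decomposition simultaneously places $F$ in $BH^{p,s}_{w}$. Applying (ii) to the tail $F-\sum_{i\leq N}f_i=\sum_{i>N}f_i$ gives $\|F-\sum_{i\leq N}f_i\|^{\bar{p}}_{BH^{p,s}_{w}}\leq \sum_{i>N}\|f_i\|^{\bar{p}}_{BH^{p,s}_{w}}\to 0$, i.e.\ genuine quasi-norm convergence. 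Any Cauchy sequence in $BH^{p,s}_{w}$ admits a subsequence with geometrically decaying $\bar{p}$-differences, which reduces completeness to this summable case.

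The principal obstacle I anticipate lies in (iii): I must identify the limit produced by Fact~1.5 from the doubly-indexed block family with the formal sum $\sum_i f_i$, so that the tail estimate makes sense. This requires grouping the double series either ``by $i$ first'' or ``jointly over $(i,k)$'' and showing the two agree $w$-a.e.; this interchange is licensed by the $\bar{p}$-summability of the combined coefficients together with a second application of Fact~1.5 to each grouping. Once this identification is in place, parts (i) and (ii) go through with only routine bookkeeping.
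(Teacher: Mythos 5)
The paper does not actually contain a proof of this lemma; it simply writes ``See \cite{L} for a proof of Lemma 3.6,'' so there is nothing in this document to compare your argument against line by line. On the merits, your approach is the standard one for atomic/block spaces of this kind and is essentially correct. In (i), the pointwise multiplier $\varphi=f/g$ with $|\varphi|\leq 1$ turns each block of a decomposition of $g$ into a block of a decomposition of $f$ with the same coefficients; the one place where care is needed is that this requires the block series for $g$ to converge $w$-a.e.\ (Fact~1.5), which in turn requires the standing hypotheses of Definition~1.4 on $w$, $q$, $s$ even though the lemma as typeset states only ``$w$ a weight''---you are right to invoke Fact~1.5 explicitly. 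In (ii) the concatenation of $\varepsilon 2^{-i}$-optimal decompositions is exactly what is needed. In (iii) the reduction to ``$\bar p$-absolutely summable series converge'' is the standard completeness criterion for $\bar p$-normed quasi-Banach spaces, and the tail estimate via (ii) closes the argument.

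The one point you flag as the ``principal obstacle''---identifying the $w$-a.e.\ limit of the jointly-indexed block series with $\sum_i f_i$---is indeed where the work is, and your sketch is slightly thinner than the rest. To make it airtight, the cleanest route is to do the identification in $H^p_w$ rather than $w$-a.e.: Fact~1.5 gives a uniform bound $\|h\|_{H^p_w}\leq C$ for every $(p,s,w)$-block, so $\sum_{i,k}\|\lambda_{i,k}h_{i,k}\|^{\bar p}_{H^p_w}<\infty$, and absolute $\bar p$-summability in the complete quasi-normed space $H^p_w$ gives unconditional convergence, so any regrouping of the double series has the same $H^p_w$-limit; since each partial-sum sequence also converges $w$-a.e.\ (Fact~1.5 again) and a sequence convergent in $H^p_w$ has a subsequence convergent $w$-a.e.\ to the same limit, the $w$-a.e.\ limits agree as well. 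With that made explicit, the tail identity $F-\sum_{i\leq N}f_i=\sum_{i>N}f_i$ holds and (ii) finishes (iii). This is a small expansion of what you already wrote, not a change of strategy, and the overall proof is sound.
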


\par See \cite{L} for a proof of Lemma 3.3.

\begin{lemma}\label{pro_2.9}
  Let $0<r< \infty$, we have ,
  \par (i) if  $w\in RH_{r}$, then, a.e. implies $w$-a.e.;
  %  $w(E)=0$ if $|E|=0$,
  \par (ii) if  $w\in RH_{r}\bigcap P$, then, $w$-a.e. implies a.e..
   % $w(E)=0$ if and only if $|E|=0$.
\end{lemma}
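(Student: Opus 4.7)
My plan is to prove the two implications separately, with (i) being essentially trivial and (ii) being a direct unwinding of the $A_p^+$ inequality combined with the covering provided by $\mathcal{P}$.

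For part (i), I would argue that this is purely a property of Lebesgue integration and does not really use $A_p^+$ at all. Since a weight $w$ is by Definition~1.1 a nonnegative locally integrable function, the set function $E \mapsto w(E) := \int_E w(x)\,dx$ is absolutely continuous with respect to Lebesgue measure; writing $\int_E w\,dx = \int \chi_E w\,dx$ and noting that $|E|=0$ forces $\chi_E w = 0$ a.e.\ in the Lebesgue sense, the integral vanishes. Thus a.e.\ convergence immediately gives $w$-a.e.\ convergence (on the complement of an exceptional set of Lebesgue measure zero, which then has $w$-measure zero).

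For part (ii), fix $0<p<\infty$ (when $p=\infty$ use the identity $A_\infty^+ = \bigcup_{0<p<\infty} A_p^+$ to reduce to some finite $p$). Let $E$ be any set with $w(E)=0$, and let $\{Q_i\}$ be a sequence of cubes with disjoint interiors and $w(Q_i)>0$ for each $i$ such that $\mathbb{R}^n = \bigcup_i Q_i$, which is available because $w \in \mathcal{P}$. For every $i$ the set $E\cap Q_i$ is a measurable subset of $Q_i$, so Definition~1.1 gives
\begin{equation*}
\left(\frac{|E\cap Q_i|}{|Q_i|}\right)^p \le C\,\frac{w(E\cap Q_i)}{w(Q_i)} \le C\,\frac{w(E)}{w(Q_i)} = 0,
\end{equation*}
which forces $|E\cap Q_i|=0$. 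Summing (or rather taking a countable union and using subadditivity of Lebesgue measure) yields $|E| \le \sum_i |E\cap Q_i| = 0$, so $w$-a.e.\ implies a.e.

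The only point requiring a second look is why the $\mathcal{P}$ hypothesis is needed in part (ii). Without it, the $A_p^+$ condition is vacuous on any cube $Q$ with $w(Q)=0$, so a set could have $w$-measure zero yet huge Lebesgue measure (consider, e.g., $w\equiv 0$ on a half-space). The role of $\mathcal{P}$ is precisely to exhibit a countable cover of $\mathbb{R}^n$ on which the $A_p^+$ inequality has nonvacuous content; this is the only place in the argument where something beyond the defining inequality is invoked, and it is what makes the covering step legitimate.
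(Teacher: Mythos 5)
Your proof is correct and is the natural direct argument for this lemma; the paper itself only cites \cite{L} for the proof, so there is no in-text version to compare against, but the approach you take (part (i) is pure measure theory, part (ii) applies the $A_p^+$ inequality to $E\cap Q_i$ over the countable cover supplied by $\mathcal{P}$ and sums) is exactly what the statement calls for, and both steps are carried out cleanly.

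One small remark on your closing paragraph: the example you give of a weight vanishing identically on a half-space does not actually lie in $A_p^+$ unless it is a.e.\ zero. Indeed, if $w\in A_p^+$ and $w(Q_0)>0$ for some cube $Q_0$, then for any cube $Q$ one may choose a larger cube $Q'\supset Q\cup Q_0$ and apply \eqref{2.1} with $E=Q$ inside $Q'$ to conclude $w(Q)>0$; thus an $A_p^+$ weight has $w(Q)>0$ for every cube as soon as it is not the zero weight. So the true role of the $\mathcal{P}$ hypothesis is to exclude precisely the degenerate $w=0$ a.e.\ case (for which \eqref{2.1} is vacuous and the conclusion of (ii) fails), not a half-space example. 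This does not affect the validity of your proof; it is only the motivational aside at the end that overstates what can go wrong.
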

See \cite{L} for a proof of Lemma 3.4.

\par
The following known inequality will be used below, for $0<p<\infty$, weight $ w$, and  $b_i\in L^{p}_{w}$,$i=1,2,\cdots, $
\begin{equation}\label{2.1}
\|\sum_{i=1}^{\infty}  b_i \|^{\bar{p}}_{L^{p}_{w}} \leq  \sum_{i=1}^{\infty}\|  b_i \|^{\bar{p}}_{L^{p}_{w}}.
\end{equation}

\par Denote by $Q^{x_0}_{l} $ the cube centered at $x_0$ with side length $2l$. Clearly, $\lambda Q^{x_0}_{l} =Q^{x_0}_{\lambda l}$ for $\lambda>0$.
 Now, let us prove the theorems.

 \par In the following proof of the theorems, we always  suppose that  $T$ is defined for every $(p,s,w)$-block and is a  linear operator or sublinear  operator whose commutator satisfies (1.3), $b$ is a $BMO$ function, and  $h$ is a    $(p,s,w)$-block with  supp $h\subseteq Q=Q^{x_0}_{2^{m_0}}$.

 \par Denote $n^{1/2}=2^{l_0}$ and $ k_0=m_0+l_0 $, clearly $n^{1/2}Q=Q^{x_0}_{2^{k_0}}$.

\par   We need the following split of $T_bh$ to prove the theorems, for all $x\in {\bf R}^n$,
\begin{equation}\label{eq.7.5}
|T_bh(x)|\leq |T((b-b_{Q^{x_0}_{2^{k_0}}})h)(x)|+|b_{Q^{x_0}_{2^{k_0}}}-b(x)||Th(x)|.
\end{equation}
This can be seen from the following.
By H\"{o}lder inequality, (3.3) and the definition of $h$, we see that
\begin{eqnarray*}
 \|(b-b_{Q^{x_0}_{2^{k_0}}})h\|_{L^s}
 &\leq&  \|(b-b_{Q^{x_0}_{2^{k_0}}})\chi_{Q^{x_0}_{2^{k_0}}}\|_{L^{s'}} \| h\|_{L^s}
 \\
 &\leq& C|Q|^{1/s'}\|b\|_{BMO}|Q|^{1/s}w(Q)^{-1/p},
 \end{eqnarray*}
it follows $ \tilde{h}=(C|Q|^{1/s'}\|b\|_{BMO})^{-1}(b-b_{Q^{x_0}_{2^{k_0}}})h $ is a $(p,s,w)$-block, then,  given $\tilde{x}\in {\bf R}^n$, $ b(\tilde{x}) \in {\bf R}$, let
 \begin{equation*}
(b(\tilde{x})-b(y))h(y)
=
(b(\tilde{x})-b_{Q^{x_0}_{2^{k_0}}})h(y) +(b_{Q^{x_0}_{2^{k_0}}}-b(y))h(y)
=\mu_1 h(y) + \mu_2 \tilde{h}(y),
\end{equation*}
 where $\mu_1= b(\tilde{x})-b_{Q^{x_0}_{2^{k_0}}} \in {\bf R}, \mu_2= -C|Q|^{1/s'}\|b\|_{BMO}\in {\bf R}$, it is easy to check  $(\mu_1 h(y) + \mu_2 \tilde{h}(y))/(|\mu_1|+|\mu_2|)$
% $\frac{\mu_1 h(y) + \mu_2 \tilde{h}(y)}{|\mu_1|+|\mu_2|}$
  is a $(p,s,w)$-block. Since $T$ is sublinear, then $T((b(\tilde{x})-b)h)(x)$ is well defined,
  and we have
  \begin{equation*}
|T(b(\tilde{x})-b)h)(x)|
\leq
|b(\tilde{x})-b_{Q^{x_0}_{2^{k_0}}}||Th(x)| +|T((b_{Q^{x_0}_{2^{k_0}}}-b)h)(x)|
\end{equation*}
for all $x\in {\bf R}^n$, taking $x=\tilde{x}$, it follows that   (3.5) holds.

\par Let us first prove Theorem 2.1 and Theorem 2.5.
\begin{proof}[Proof of Theorem 2.1  and Theorem 2.5 ]
To prove Theorem 2.1  and Theorem 2.5, by a standard argument, it is enough to show
 the following claim.

 {\bf Claim}: Under the conditions of Theorem 2.1 or Theorem 2.5, we have
  \begin{eqnarray}
 \|T_b h\|_{L^p_w} \leq C \|b\|_{BMO}
 \end{eqnarray}
  for each  $(p,s,w)$-block $h$.

\par  Let $\sum_{i=1}^{\infty} \lambda _i a_i  \in BH^{p,s}_{w}$, where
  each $a_i $ is a $(p,s,w)$-block  and $\sum_{i=1}^{\infty}|\lambda_i|^{\bar{p}} <\infty$.

  When $T$ is a linear operator, once (3.6) is proved, by (3.4),
 $\|T_b(\sum_{i=N_1}^{N_2} \lambda _i a_i) \|^{\bar{p}}_{L^{p}_{w}} \leq C \sum_{i=N_1}^{N_2} | \lambda _i|^{\bar{p}}$,
  we have that $\{\sum_{i=1}^{N} \lambda _i T_b a_i\}_{N=1}^{\infty}$ is a Cauchy sequence in $L^{p}_{w}$,
then, there is a unique $g\in L^{p}_{w}$, such that
 $   \sum_{i=1}^{N} \lambda _i T_ba_i \rightarrow g $ in $L^{p}_{w}$ as $N\rightarrow\infty$. Let $T_b(\sum_{i=1}^{\infty} \lambda _i a_i)=g$, and then (2.1) holds in   $L^{p}_{w}$. By using (3.6) again, we can prove for all $\delta>0$ that
$w(\{ x:|g-\sum_{i=1}^{\infty}\lambda _iT_ba_i|>\delta\})=0$ that implies that (2.1) holds $w$-a.e.. And then, using (3.4) and   (3.6),
  we get (2.2). Thus, we finish the proofs of the parts (i) of Theorem 2.1 and Theorem 2.5.

\par  When $T$ is a sublinear operator whose commutator  satisfies (1.3), once (3.6) is proved, using  (3.4), we get (2.2). Thus,  the parts (ii) of Theorem 2.1 and Theorem 2.5 hold.
\end{proof}

\par Next we need to  prove the claim.
To prove (3.6), we write, for the above $h$,
\begin{eqnarray}
 \|T_b h\|^p_{L^p_w} =\int_{Q^{x_0}_{2^{k_0+1}}}|T_b h|^pw+\int_{{\bf R}^n \backslash Q^{x_0}_{2^{k_0+1}}}|T_b h|^pw=: I+II.
 \end{eqnarray}

\begin{proof}[Proof of (3.6) under the conditions of Theorem 2.1]
 We need to estimate $I$ and $II$ in (3.7).
 \par For $I$, we notice $rp/(r-1)\leq s\leq \infty $. When $s<\infty$,  by H\"{o}lder inequality, the $L^s$ boundedness of $T_b$, (3.1), (1.1) and the  definition  of $h$, we have
\begin{eqnarray*}
I
&\leq&
\left(\int_{2n^{1/2}Q}|T_bh|^s\right)^{p/s}
\left(\int_{2n^{1/2}Q}w^{s/(s-p)}\right)^{(s-p)/s}
\\
&\leq&
C  \|b\|_{BMO}^p\left(\int_{Q}|h|^s\right)^{p/s}
 |Q|^{-p/s}w(Q)^{}
\\
&\leq &
C \|b\|_{BMO}^p.
 \end{eqnarray*}
When $s=\infty$,  using the $L^{\infty}$  boundedness of $T_b$ and the  definition  of $h$, we have $I\leq C \|b\|_{BMO}^p$.
\par
For $II$, we have by using (3.5) and (3.4) that
\begin{eqnarray*}
II^{\bar{p}/p}
&\leq &
\left(\int_{{\bf R}^n\backslash {Q^{x_0}_{2^{k_0+1}}}}|T((b-b_{Q^{x_0}_{2^{k_0}}})h)(x)|^pw(x)dx\right)^{\bar{p}/p}
\\
& & +\left(\int_{{\bf R}^n\backslash {Q^{x_0}_{2^{k_0+1}}}}(|b_{Q^{x_0}_{2^{k_0}}}-b(x)||Th(x)|)^pw(x)dx\right)^{\bar{p}/p}
\\
&=: &II_1^{\bar{p}/p}+II_2^{\bar{p}/p}.
 \end{eqnarray*}
 For $II_1$, we have,
\begin{eqnarray*}
II_1
 &\leq &
\|(b-b_{Q^{x_0}_{2^{k_0}}})h\|_{L^1}^p
\sum_{i=1}^{\infty}\int_{{Q^{x_0}_{2^{k_0+i+1}}}\backslash {Q^{x_0}_{2^{k_0+i}}}}|x-x_0|^{-np}w(x)dx
\\
& & ~~~~{\rm by} ~(1.2)
\\
&\leq &
C \|(b-b_{Q^{x_0}_{2^{k_0}}})h\|_{L^1}^p
\sum_{i=1}^{\infty} 2^{-npi}|Q^{x_0}_{2^{k_0}}|^{-p}w({Q^{x_0}_{2^{k_0+i+1}}})
\\
&\leq &
C \|(b-b_{Q^{x_0}_{2^{k_0}}})h\|_{L^1}^p|Q^{x_0}_{2^{k_0}}|^{-p}w(Q^{x_0}_{2^{k_0}})
\sum_{i=1}^{\infty} 2^{-n(p-q)i}
\\
&&~~~~~~~~~~~~~~~~~~~~~~~({\rm by}~ (1.1) )
\\
&\leq &
C \|b\|_{BMO}^p\|h\|_{L^s}^p|Q^{x_0}_{2^{k_0}}|^{p/s'}|Q^{x_0}_{2^{k_0}}|^{-p}w(Q^{x_0}_{2^{k_0}})
\\
&&~~~~~~~({\rm noticing } ~q<p, s>1, ~ {\rm by ~H\ddot{o}lder~inequality~ and~} (3.3))
\\
&\leq &
C \|b\|_{BMO}^p£¬
\\
&&~~~~~~~~~~~~~~~~~~~~~~~({\rm by~the ~ definition ~ of ~}h {\rm   ~ and ~} (1.1)).
 \end{eqnarray*}
 For $II_2$,  we have,
\begin{eqnarray*}
II_2
 &\leq &
C \|h\|_{L^1}^p
\sum_{i=1}^{\infty}
\int_{ Q^{x_0}_{2^{k_0+i+1}}\backslash Q^{x_0}_{2^{k_0+i}}}|b(x)-b_{Q^{x_0}_{2^{k_0}}}|^p|x-x_0|^{-np}w(x)dx
\\ &&~~~~~~~~~~~~~~~~~~~~~~~~~~~~~~~~~~~({\rm by}~(1.2))
\\
&\leq &
C \|h\|_{L^1}^p
\sum_{i=1}^{\infty}
2^{-npi}|Q^{x_0}_{2^{k_0}}|^{-p}
\int_{ Q^{x_0}_{2^{k_0+i+1}}}|b(x)-b_{Q^{x_0}_{2^{k_0}}}|^pw(x)dx
\\
&\leq &
C \|b\|_{BMO}^p
\|h\|_{L^1}^p
|Q^{x_0}_{2^{k_0}}|^{-p}
\sum_{i=1}^{\infty} 2^{-npi}i^pw({Q^{x_0}_{2^{k_0+i+1}}})
\\&&~~~~~~~~~~~~~~~~~~~~~~~~~~~~~~~~~~~({\rm by}~(3.2))
\\
&\leq &
C \|b\|_{BMO}^p
\|h\|_{L^1}^p|Q^{x_0}_{2^{k_0}}|^{-p}w(Q^{x_0}_{2^{k_0}})
\sum_{i=1}^{\infty} 2^{-n(p-q)i}i^p
\\ && ~~~~~~~~~~~~~~~~~~~~~~~({\rm by}~(1.1))%w\in A^+_{q})
\\
&\leq &
C \|b\|_{BMO}^p\|h\|_{L^s}^p|Q^{x_0}_{2^{k_0}}|^{p/s'}|Q^{x_0}_{2^{k_0}}|^{-p}w(Q^{x_0}_{2^{k_0}})
\\
&&~~~~~~~({\rm noticing } ~q<p~,s>1,~ {\rm by ~H\ddot{o}lder~inequality})
\\
&\leq &
C\|b\|_{BMO}^p,
\\
&&~~~~~~~~~~~~~~~~~~~~~~~({\rm by~the ~ definition ~ of ~}h {\rm ~and}~(1.1)).
 \end{eqnarray*}
Then  $II\leq C \|b\|_{BMO}^p $. Thus,  (3.6) holds under the conditions of Theorem 2.1.
\end{proof}
 This concludes the proof of Theorem 2.1.
\begin{proof}[Proof of (3.6)  under the conditions of Theorem 2.5]
  We need to estimate $I$ and $II$ in (3.7) under the conditions of Theorem 2.5.
 We have $II \leq C\|b\|_{BMO}^p$   from the estimate of $II$ in the above proof of (3.6)  under the conditions of Theorem 2.1.
 Therefore, we need only to estimate $I$. We have by using (3.5) and (3.4) that
\begin{eqnarray*}
I^{\bar{p}/p}
&\leq &
\left(\int_{  {Q^{x_0}_{2^{k_0+1}}}}|T((b-b_{Q^{x_0}_{2^{k_0}}})h)(x)|^pw(x)dx\right)^{\bar{p}/p}
\\
& & +\left(\int_{  {Q^{x_0}_{2^{k_0+1}}}}(|b_{Q^{x_0}_{2^{k_0}}}-b(x)||Th(x)|)^pw(x)dx\right)^{\bar{p}/p} =:I_1^{\bar{p}/p}+I_2^{\bar{p}/p}.
 \end{eqnarray*}
\par  Now. let us estimate $I_1$. For $s<\infty$, we have that
\begin{eqnarray*}
I_1
 &=&
\int_{{Q^{x_0}_{2^{k_0+1}}}}|T((b-b_{Q^{x_0}_{2^{k_0}}})h)|^pw
\\
&\leq&
\left(\int_{{Q^{x_0}_{2^{k_0+1}}}}|T((b-b_{Q^{x_0}_{2^{k_0}}})h)|^{\tilde{s}}\right)^{p/\tilde{s}}
\left(\int_{{Q^{x_0}_{2^{k_0+1}}}}w^{\tilde{s}/(\tilde{s}-p)}\right)^{(\tilde{s}-p)/\tilde{s}}
\\
&&
~~~~~~~~~~~~~~~~({\rm by~H\ddot{o}lder~inequality~ for ~the~ index} ~\tilde{s}/p)
\\
&\leq&
C \left(\int_{{Q^{x_0}_{2^{k_0+1}}}}|(b-b_{Q^{x_0}_{2^{k_0}}})h|^{\tilde{s}}\right)^{p/\tilde{s}}
|Q^{x_0}_{2^{k_0+1}}|^{-p/\tilde{s}} w({Q^{x_0}_{2^{k_0+1}}})
\\
&& ~~~~~~~~({\rm by~the} ~L^{\tilde{s}}~{\rm boundedness ~of }~T   ~{\rm and~(3.1)~since}~w\in RH_r~{\rm and}~ r\geq \tilde{s}/(\tilde{s}-p))
\\
&\leq&
C |Q^{x_0}_{2^{m_0}}|^{-p/\tilde{s}} w({Q^{x_0}_{2^{m_0}}})
\\
&&
~~~~~~~~~~\times
\left(
\left(
\int_{{Q^{x_0}_{2^{k_0+1}}}}|h|^{\tilde{s}\cdot(s/\tilde{s})}
\right)^{1/(s/\tilde{s})}
\left(
\int_{{Q^{x_0}_{2^{k_0+1}}}}|b-b_{Q^{x_0}_{2^{k_0}}}|^{\tilde{s}\cdot(s/\tilde{s})'}
\right)^{1/(s/\tilde{s})'}
\right)^{p/\tilde{s}}
\\
&&~~~~~~({\rm by~H\ddot{o}lder~inequality~ for ~the~ index} ~s/\tilde{s}~{\rm and }~(1.1))
\\
&\leq &
C \|b\|_{BMO}^p\|h\|_{L^s}^{p}|Q^{x_0}_{2^{m_0}}|^{-p/s}w({Q^{x_0}_{2^{m_0}}})
\\
&&~~~~~~~~~~~~({\rm by}~(3.3) ~{\rm since ~  }1\leq \tilde{s}\cdot(s/\tilde{s})' <\infty ~{\rm which ~follows~ from~  }s/(s+1)\leq \tilde{s}<s)
\\
&\leq &
C \|b\|_{BMO}^p
\\
&&
~~~~~~~~~~~~~~~~~~~~~~~~~~~~~~~~~~~~~~~~~~~~~({\rm by}~\|h\|_{L^s} \leq |Q^{x_0}_{2^{m_0}}|^{1/s}w(Q^{x_0}_{2^{m_0}})^{-1/p} ).
 \end{eqnarray*}
For $s=\infty$, noticing $\left(\int_{{Q^{x_0}_{2^{k_0+1}}}}|(b-b_{Q^{x_0}_{2^{k_0}}})h|^{\tilde{s}}\right)^{p/\tilde{s}}\leq C\|h\|_{L^{\infty}}^p \|b\|_{BMO}^p|Q^{x_0}_{2^{m_0}}|^{p/\tilde{s}}$ by (3.3), then, according to the above estimate, we have
 $I_1\leq C\|h\|_{L^{\infty}}^p \|b\|_{BMO}^p w({Q^{x_0}_{2^{m_0}}})\leq C \|b\|_{BMO}^p$ by the definition of $h$.

 \par Next, let us estimate $I_2$.  For $s<\infty$,
 we have
\begin{eqnarray*}
I_2
 &=&
\int_{{2^{l_0+1}Q}}|(b-b_{Q^{x_0}_{2^{k_0}}})Th|^pw
\\
&\leq&
\left(\int_{{Q^{x_0}_{2^{k_0+1}}}}|(b-b_{Q^{x_0}_{2^{k_0}}})Th|^{\tilde{s}}\right)^{p/\tilde{s}}
\left(\int_{{Q^{x_0}_{2^{k_0+1}}}}w^{\tilde{s}/(\tilde{s}-p)}\right)^{(\tilde{s}-p)/\tilde{s}}
\\
&&
~~~~~~~~~~~~~~~~({\rm by~H\ddot{o}lder~inequality~ for ~the~ index} ~\tilde{s}/p)
\\
&\leq&
C|Q^{x_0}_{2^{m_0}}|^{-p/\tilde{s}} w({Q^{x_0}_{2^{m_0}}})
\\
&&
~~~~~~~~~~\times
 \left(
\left(
\int_{{Q^{x_0}_{2^{k_0+1}}}}|Th|^{\tilde{s}\cdot(s/\tilde{s})}
\right)^{1/(s/\tilde{s})}
\left(
\int_{{Q^{x_0}_{2^{k_0+1}}}}|b-b_{Q^{x_0}_{2^{k_0}}}|^{\tilde{s}\cdot(s/\tilde{s})'}
\right)^{1/(s/\tilde{s})'}
\right)^{p/\tilde{s}}
\\
&&~~~~~~({\rm by~H\ddot{o}lder~inequality~ for ~the~ index} ~s/\tilde{s},~ {\rm(3.1)  } ~ {\rm and }~(1.1))
\\
&\leq &
C \|b\|_{BMO}^p\|h\|_{L^s}^{p}|Q^{x_0}_{2^{m_0}}|^{-p/s} w({Q^{x_0}_{2^{m_0}}})
\\
&& ~~~~~~~~({\rm by~the ~boundedness ~of }~T {\rm ~on} ~L^{s} ~{\rm and~ (3.3)~  }{\rm since ~  }1\leq \tilde{s}\cdot(s/\tilde{s})' <\infty)
\\
&\leq &
C \|b\|_{BMO}^p
\\
&&
~~~~~~~~~~~~~~~~~~~~~~~~~~~~~~~~~~~~~~~~~~~~~({\rm by~the ~definition~of~} h ).
 \end{eqnarray*}

 \par  For $s=\infty$, noticing $\|Th\|_{L^{\infty}}\leq C\|h\|_{L^{\infty}}$, we have by(3.3) that
  $$\left(\int_{{Q^{x_0}_{2^{k_0+1}}}}|(b-b_{Q^{x_0}_{2^{k_0}}})Th|^{\tilde{s}}\right)^{p/\tilde{s}}\leq C\|h\|_{L^{\infty}}^p \|b\|_{BMO}^p|Q^{x_0}_{2^{m_0}}|^{p/\tilde{s}},$$ then, according to the above estimate, we have
 $I_2\leq C\|h\|_{L^{\infty}}^p \|b\|_{BMO}^p w({Q^{x_0}_{2^{m_0}}})\leq C \|b\|_{BMO}^p$ by the definition of $h$.

\par Thus, we have proved (3.6) under the conditions of Theorem 2.5.
\end{proof}
 This concludes the proof of Theorem 2.5.

\par To prove  Theoorem 2.3 and Theorem 2.7, we need the following molecular characterization of $BH^{p,s}_{w}$.

\begin{definition}\label{def 5.1}
  Let $0<p <\infty,    w\in A _{q,r}   $ with $ 0<q <p  $  and  $ 1<r <\infty  $,
   and $\max\{ rp/(r-1), p/q\}< s\leq \infty$.
  Set $0<\varepsilon <1 -q/p,
c =1-q/p- \varepsilon,$ and $ d=1-1/s- \varepsilon.$
  A function $M(x)\in L^s $
 is said to be a  $ (p, s,q,w, \varepsilon )$-molecule (centered at  $x_0$), if

\par (i)~~~~$M(x)\left( |Q^{x_0}_{|x-x_0|}|^{-1/s}w(Q^{x_0}_{|x-x_0|})^{1/p}\right)^{d/(d-c)}\in L^s ,$

\par (ii)~~~~$\|M\|^{c/d}_{L^{s}   }
\left \|
M(x)\left( |Q^{x_0}_{|x-x_0|}|^{-1/s}w(Q^{x_0}_{|x-x_0|})^{1/p}\right)^{d/(d-c)}
\right \|^{1-c/d}_{L^{s}   }
\equiv \Re (M)<\infty.$
\end{definition}

 \par  For convenience, we will denote $\Re (M)$ by $\Re _s(M)$.

\begin{proposition}\label{th_5.3}
  Let $0<p <\infty,    w\in A _{q,r}   $ with $ 0<q <p  $  and  $ 1<r <\infty  $,
 %with the critical index $r_w$ for the reverse H\"{o}lder condition, let
 and $\max\{ rp/(r-1), p/q\}< s\leq \infty$.
Then,
   every  $ (p, s, q, w, \varepsilon )$-molecular $M(x)$
   % centered at any point
   is in $BH^{p,s}_{w}$, with $\|M \|^{
}_{BH^{p,s}_{w}}\leq C \Re(M)$, i.e.
\begin{equation*}\label{5.6}
M(x)=\sum \limits_{k=0}^{\infty}\lambda_ka_k(x), {\rm ~converges ~for~ all~} x\in {\bf R}^n,
  \end{equation*}
where each $a_k$ is a $ (p, s, w )$-block and $\left(\sum
\limits_{k=0}^{\infty}|\lambda_k|^{\bar{p}}\right)^{1/\bar{p}}\leq C \Re(M)$,  the constant $C$ is
independent of $M$.
\end{proposition}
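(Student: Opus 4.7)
The plan is to carry out a standard molecule-to-block decomposition, adapted to the weighted setting. Write $A=\|M\|_{L^s}$ and $B=\bigl\|M(\cdot)F(|\cdot-x_0|)^{d/(d-c)}\bigr\|_{L^s}$, where $F(r)=|Q^{x_0}_{r}|^{-1/s}w(Q^{x_0}_{r})^{1/p}$, so that $\Re(M)=A^{c/d}B^{1-c/d}$. I first choose $r_0>0$ so that $T_0:=1/F(r_0)=|Q^{x_0}_{r_0}|^{1/s}w(Q^{x_0}_{r_0})^{-1/p}$ equals $(A/B)^{(d-c)/d}$. I then set $Q_k=Q^{x_0}_{2^{k}r_0}$, $T_k=|Q_k|^{1/s}w(Q_k)^{-1/p}$, and split $M=\sum_{k\geq 0}m_k$ with $m_0=M\chi_{Q_0}$ and $m_k=M\chi_{Q_k\setminus Q_{k-1}}$ for $k\geq 1$. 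Defining $\lambda_k=\|m_k\|_{L^s}/T_k$ (omitting indices with $\lambda_k=0$) and $a_k=m_k/\lambda_k$, each $a_k$ is supported in $Q_k$ with $\|a_k\|_{L^s}=T_k$, hence is a $(p,s,w)$-block. Since the partial sums $\sum_{k=0}^{N}\lambda_k a_k=M\chi_{Q_N}$ exhaust $M$ as $N\to\infty$, the series converges to $M$ pointwise on $\mathbf{R}^n$.

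The key estimates are on the coefficients. For $k\geq 1$, the doubling of $w$ (Lemma 3.1) combined with $|Q^{x_0}_{|x-x_0|}|\approx |Q_k|$ on the annulus $Q_k\setminus Q_{k-1}$ yields $F(|x-x_0|)\approx 1/T_k$ uniformly there. Together with the molecular condition this gives
$$\|m_k\|_{L^s}^{s}\leq C\,T_k^{sd/(d-c)}\int_{Q_k\setminus Q_{k-1}}|M|^{s}F(|x-x_0|)^{sd/(d-c)}\,dx\leq C\,T_k^{sd/(d-c)}B^{s},$$
so $|\lambda_k|\leq C\,B\,T_k^{c/(d-c)}$. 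For $k=0$, the trivial bound $\|m_0\|_{L^s}\leq A$ together with the choice of $T_0$ gives $|\lambda_0|\leq A/T_0=A^{c/d}B^{1-c/d}=\Re(M)$.

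The main obstacle is the geometric decay of $T_k$ in $k$, which is not provided by $A_q^+$ alone since that condition only controls $w$ from above on enlarged cubes. I resolve this using the reverse H\"older condition (Lemma 3.2): the assumption $s>r_w p/(r_w-1)$ permits a choice of $r$ with $s/(s-p)<r<r_w$, so $w\in RH_r$. A standard H\"older application to the $RH_r$ estimate produces the improved lower bound $w(Q_k)\geq C^{-1}2^{kn/r'}w(Q_0)$, where $r'$ is the conjugate exponent of $r$. Since $r>s/(s-p)$ gives $1/r'>p/s$, one obtains
$$T_k\leq C\,2^{kn(1/s-1/(r'p))}\,T_0,\qquad 1/s-1/(r'p)<0.$$

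It then remains to sum. For $k\geq 1$, one has $|\lambda_k|^{\bar p}\leq C\,B^{\bar p}T_0^{\bar p\,c/(d-c)}2^{kn\alpha\bar p}$ with $\alpha=(1/s-1/(r'p))\,c/(d-c)<0$, and the identity $B\,T_0^{c/(d-c)}=B(A/B)^{c/d}=\Re(M)$ turns this into $|\lambda_k|^{\bar p}\leq C\,\Re(M)^{\bar p}2^{kn\alpha\bar p}$. Summing the geometric series and combining with $|\lambda_0|\leq \Re(M)$ yields $\sum_{k\geq 0}|\lambda_k|^{\bar p}\leq C\,\Re(M)^{\bar p}$, which proves $M\in BH^{p,s}_{w}$ with $\|M\|_{BH^{p,s}_{w}}\leq C\,\Re(M)$.
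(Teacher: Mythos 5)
Your proof is correct and follows the standard molecule-to-block (Taibleson--Weiss style) dyadic annulus decomposition; the paper itself does not reprove Proposition~3.9 but defers to reference~\cite{L}, and your argument is the natural one used there. A few points worth making explicit: (a) solvability of $F(r_0)=(B/A)$ requires that $F(r)\to0$ as $r\to0$ and $F(r)\to\infty$ as $r\to\infty$, and \emph{both} limits rely on the reverse H\"older bound (Lemma~3.2) via $1/(r'p)>1/s$, not merely on $A_q^+$, so you are using $RH_r$ earlier in the argument than your wording suggests; (b) the comparability $F(|x-x_0|)\approx1/T_k$ on the annulus needs both the order-$q$ doubling of Lemma~3.1 (upper bound) and the $A_q^+$ inequality applied to $Q_{k-1}\subset Q_k$ (lower bound $w(Q_{k-1})\gtrsim w(Q_k)$); and (c) the case $s=\infty$ should be treated separately, since $\|m_k\|_{L^s}^s$ is not meaningful there, though the pointwise bound $|M|\le B/F^{d/(d-c)}\lesssim B\,T_k^{d/(d-c)}$ on the annulus gives $\|m_k\|_{L^\infty}\lesssim B\,T_k^{d/(d-c)}$ directly, after which the rest of the argument is unchanged. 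With these clarifications the proof is complete.
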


\par
See \cite{L} for the above definition and proposition.

\par Now let us  prove Theorem 2.3 and Theorem 2.7.
\begin{proof}[Proof of Theorem 2.3  and Theorem 2.7 ]
By a standard argument, it is enough to show
 the following claim.
\par
 {\bf Claim}: Let $0<p <\infty ,    w\in A _{q,r}   $ with $ 0<q <p  $  and  $ 1<r <\infty  $,   $h$ be a $(p,s,w)$-block, and $T$     satisfy (1.2).

\par
  (i) If $\max\{ p/q, rp/(r-1) \} <  s  \leq \infty $ and
             $ \|T_b h\|_{L^{ s}}\leq C\|b\|_{BMO} \| h\|_{L^{s}} $, (i.e. under the conditions of Theorem 2.3),
       then
\begin{equation}
 \|T_b h\|_{BH^{p,s}_{w}} \leq C \|b\|_{BMO}
 \end{equation}
\par
  (ii) If $\max\{ p/q, rp/(r-1) \} < \tilde{s} < s  \leq \infty $,
             $ \|T h\|_{L^{ s}}\leq C \| h\|_{L^{s}} $
          and
          $ \|T h\|_{L^{\tilde{s}}}\leq C \| h\|_{L^{\tilde{s}}}, $ (i.e. under the conditions of Theorem 2.7),
 then
 \begin{equation}
 \|T_b h\|_{BH^{p,\tilde{s}}_{w}} \leq C \|b\|_{BMO}.
 \end{equation}

\par  Let $\sum_{i=1}^{\infty} \lambda _i a_i  \in BH^{p,s}_{w}$, where
  each $a_i $ is a $(p,s,w)$-block and   $\sum_{i=1}^{\infty}|\lambda_i|^{\bar{p}} <\infty$.

 \par When $T$ is a linear operator,     once (3.8) (or (3.9)) are proved, by using Lemma 3.3(ii), we see that
    $\{\sum_{i=1}^{N} \lambda _i T_b a_i\}_{N=1}^{\infty}$ is a Cauchy sequence in $BH^{p,s}_{w}$ (or $BH^{p,\tilde{s}}_{w}$),
then, by Lemma 3.3(iii), there is a unique $g\in BH^{p,s}_{w}$ (or $BH^{p,\tilde{s}}_{w}$),
such that
 $   \sum_{i=1}^{N} \lambda _i T_ba_i \rightarrow g $ in $BH^{p,s}_{w}$ (or $BH^{p,\tilde{s}}_{w}$) as $N\rightarrow\infty$,
 Let $T_b(\sum_{i=1}^{\infty} \lambda _i a_i)=g$,  then (2.1) holds in  $BH^{p,s}_{w}$ (or $BH^{p,\tilde{s}}_{w}$),
 also  in $L^{p}_{w}$ since $BH^{p,s}_{w}$ (and $BH^{p,\tilde{s}}_{w}$) in $L^{p}_{w}$ (see \cite {L}),
 by Theorem 2.1(i), (2.1) holds $w$-a.e..
  Using Lemma 3.3(ii)
and (3.8) (or (3.9)),  we get (2.4) (or (2.6)).  Thus, we finish the proofs of the parts (i) of Theorem 2.3 (or Theorem 2.7).

\par  When $T$ is a sublinear operator satisfying (1.3),  once (3.8) is proved,
  by Lemma 3.3(ii), we have
\begin{equation}\label{7.2}
\| \sum|\lambda_j||T_ba_j(x)|\|^{\bar{p}}_{BH^{p,s}_{w}}\leq  \sum|\lambda_j|^{\bar{p}}\||T_ba_j(x)|\|^{\bar{p}}_{BH^{p,s}_{w}}
\leq  C\|f\|^{\bar{p}}_{BH^{p,s}_{w}}.
\end{equation}
Noticing  $w\in  P$, we see from Lemma 3.4(ii) that (1.3) hold a.e., and then, using Lemma 3.3(i) and (3.10)
 we get (2.4).
  Similarly, we get (2..6). Thus,  the parts (ii) of Theorem 2.3 and Theorem 2.7 hold.
\end{proof}
\par Next we need to  prove (3.8) and (3.9).
To prove these, by the molecular theorem  (Proposition 3.6),  it  suffices to check for every $(  p,s,w)$-block $h$ that $Th$ is
 a  $ (p, s, q, w,\varepsilon )$-molecular and $\Re_s (Th)\leq C \|b\|_{BMO}$   under the condition of Theorem 2.3,
  and that $Th$ is a  $ (p, \tilde{s}, q, w,\varepsilon )$-molecular and $\Re_{\tilde{s}} (Th)\leq C \|b\|_{BMO}$  under the condition of Theorem 2.7.

\par
To estimate $\Re_s (Th)$,  we need estimate
$$  A_s=:\left\|T_bh(x)\left( |Q^{x_0}_{|x-x_0|}|^{-1/s}w(Q^{x_0}_{|x-x_0|})^{1/p}\right)^{d/(d-c)}\right\|^{}_{L^{s}},$$
  and
 $$ B_s=:\|T_bh\|^{}_{L^{s}},$$
for every $(  p,s,w)$-block $h$.

 To estimate  $\Re_{\tilde{s}} (Th)$, we need estimate $A_{\tilde{s}}$ and $B_{\tilde{s}}$   for every $(  p,s,w)$-block $h$.

\par
To estimate $A_s$ and $A_{\tilde{s}}$, we suppose that
 supp $h\subset Q=Q^{x_0}_{2^{m_0}} $.
 We write,
\begin{equation}
   A_s^s
  =
 \left(\int _{Q^{x_0}_{2^{k_0+1}}}+\int _{(Q^{x_0}_{2^{k_0+1}})^c} \right)
|T_bh(x)|^s \left( |Q^{x_0}_{|x-x_0|}|^{-1/s}w(Q^{x_0}_{|x-x_0|})^{1/p}\right)^{ds/(d-c)} dx
% \\& =&:  J+JJ.
\end{equation}
 \par $~$  $~~~~~~~~~~~~~=~:~  J_s+JJ_s,$
 \\similarly, write
 $$A_{\tilde{s}}^{\tilde{s}} = :J_{\tilde{s}} +JJ_{\tilde{s}}.$$

\begin{proof}[Proof of (3.8)   under the conditions of Theorem 2.3]
Let $b$ be a $BMO$ function,  $h$ be a $(p,s,w)$-block with supp$h\subset Q=Q^{x_0}_{2^{m_0}} $.
 We need to estimate $A_s$   and $B_s$.

 For $B_s$, by the boundedness of $T_b$ on $L^s$, we have
$$
B_s=\|T_bh \|^{ }_{L^{s}   } \leq  C \|b\|_{BMO} \| h  \|^{ }_{L^{s}   } .
$$

\par
 For $A_s$, we need to estimate $J_s$ and $JJ_s$, $s\leq \infty$.

 \par   For $J_s$, by using for $x\in Q^{x_0}_{2^{k_0+1}}$ that
   $$
   |Q^{x_0}_{|x-x_0|}|^{-1/s}w(Q^{x_0}_{|x-x_0|})^{1/p}\leq |Q^{x_0}_{2^{k_0}}|^{-1/s}w(Q^{x_0}_{2^{k_0}})^{1/p}
   $$
   which follows from (6.4)  in \cite{L}, and the  $L^s$-boundedness of $T_b$, we have
   $$J_s\leq C\|b\|_{BMO}^s\left(
|Q^{x_0}_{2^{k_0}}|^{-1/s}w(Q^{x_0}_{2^{k_0}})^{1/p}
\right)^{ds/(d-c)}
 \|h \|^{s}_{L^{s}   }.$$

\par For $JJ_s$, noticing
that
\begin{equation}\label{7.9}
w( Q^{x_0}_{|x-x_0|}) \leq C(|Q^{x_0}_{2^{k_0}}|^{-1}|Q^{x_0}_{|x-x_0|}|)^{q}w(Q^{x_0}_{2^{k_0-1}})
\end{equation}
for $x\in (Q^{x_0}_{2^{k_0+1}})^c$, which follows from (1.1), and that $d>0, d-c>0$,
 we have
\begin{eqnarray*}
&&JJ_s
\\
&=&
\int _{(Q^{x_0}_{2^{k_0+1}})^c}
|T_b h(x)|^s \left( |Q^{x_0}_{|x-x_0|}|^{-1/s}w(Q^{x_0}_{|x-x_0|})^{1/p}\right)^{ds/(d-c)} dx
\\
& \leq &
C
\int _{(Q^{x_0}_{2^{k_0+1}})^c}
|T_b h(x)|^s \left(
|Q^{x_0}_{2^{k_0}}|^{-q/p} |Q^{x_0}_{|x-x_0|}|^{-1/s+q/p}
w(Q^{x_0}_{2^{k_0-1}})^{1/p}\right)^{ds/(d-c)} dx
\\
& &
~~~~~~~~~~~~~~~~~~~~~~~~({\rm by}~(3.12))
\\
& \leq &
C
\left(
|Q^{x_0}_{2^{k_0}}|^{-q}
w(Q^{x_0}_{2^{k_0}})\right)^{ds/[p(d-c)]}
\int _{(Q^{x_0}_{2^{k_0+1}})^c}
|T_b h(x)|^s  |x-x_0|^{dns}
 dx
\\
& &
~~~~~~~~~~~~~~~~~~~~~~~~({\rm noticing} ~d-c=q/p-1/s {\rm ~ and ~} |Q^{x_0}_{|x-x_0|}|=C|x-x_0|^n)
\\
& \leq &
C( JJ_{s1}^{1/s}+JJ_{s2}^{1/s})^s
 \\
& &
~~~~~~~~~~~~~~~~~~~~~~~~({\rm by} ~(3.5) {\rm ~and~Minkowski~ inequality} ),
\end{eqnarray*}
where
\begin{eqnarray*}
JJ_{s1}=\left(
|Q^{x_0}_{2^{k_0}}|^{-q}
w(Q^{x_0}_{2^{k_0}})\right)^{\frac{ds}{p(d-c)}}
\int _{(Q^{x_0}_{2^{k_0+1}})^c}
|T((b-b_{Q^{x_0}_{2^{k_0}}}) h)(x)|^s  |x-x_0|^{dns}
 dx,
\end{eqnarray*}
and
\begin{eqnarray*}
JJ_{s2}=\left(
|Q^{x_0}_{2^{k_0}}|^{-q}
w(Q^{x_0}_{2^{k_0}})\right)^{\frac{ds}{p(d-c)}}
\int _{(Q^{x_0}_{2^{k_0+1}})^c}
|(b(x)-b_{Q^{x_0}_{2^{k_0}}}) T h(x)|^s  |x-x_0|^{dns}
 dx.
\end{eqnarray*}
For $JJ_{s1}$, we have
\begin{eqnarray*}
JJ_{s1}
& \leq &
C
\left(
|Q^{x_0}_{2^{k_0}}|^{-q}
w(Q^{x_0}_{2^{k_0}})\right)^{ds/[p(d-c)]}
\\ & &
\times
\|(b-b_{Q^{x_0}_{2^{k_0}}})h\|_{L^1}^s
\int _{(Q^{x_0}_{2^{k_0+1}})^c}
  |x-x_0|^{(d-1)ns}
 dx
\\& &~~~~~~~~~~~~~~~~~~~
~~~~~({\rm by}~(1.2))
\\
& \leq &
C \|b\|_{BMO}^s
\left(
|Q^{x_0}_{2^{k_0}}|^{-q}
w(Q^{x_0}_{2^{k_0}})\right)^{ds/[p(d-c)]}
\\
&&
\times
\|h\|_{L^s}^s
|Q^{x_0}_{2^{k_0}}|^{(1-1/s)s}
|Q^{x_0}_{2^{k_0}}|^{s(d-1+1/s)}
 \\
& &
~~~~~~~~~~~~~~~({\rm by ~H\ddot{o}lder's~inequality,~} (3.3)~ {\rm and ~
noticing }~d-1+1/s=-\varepsilon <0)
\\
& = & C\|b\|_{BMO}^s \left(
|Q^{x_0}_{2^{k_0}}|^{-1/s}w(Q^{x_0}_{2^{k_0}})^{1/p}
\right)^{ds/(d-c)}
 \|h \|^{s}_{L^{s}   }
 \\& &~~~~~~~~~~~~~
 ~~~~~~~~~~~({\rm noticing} ~d-c=q/p-1/s).
\end{eqnarray*}
For $JJ_{s2}$, we have
\begin{eqnarray*}
JJ_{s2}
& \leq &
C
\left(
|Q^{x_0}_{2^{k_0}}|^{-q}
w(Q^{x_0}_{2^{k_0}})\right)^{ds/[p(d-c)]}
\\ & &
\times
\|h\|_{L^1}^s
\sum_{i=1}^{\infty}
\int_{ Q^{x_0}_{2^{k_0+i+1}}\backslash Q^{x_0}_{2^{k_0+i}}}
 |b(x)-b_{Q^{x_0}_{2^{k_0}}}|^s |x-x_0|^{(d-1)ns}
 dx
\\& &~~~~~~~~~~~~~~~~~~~
~~~~~({\rm by}~(1.2))
\\
& \leq &
C
\left(
|Q^{x_0}_{2^{k_0}}|^{-q}
w(Q^{x_0}_{2^{k_0}})\right)^{ds/[p(d-c)]}
\|h\|_{L^s}^s |Q^{x_0}_{2^{k_0}}|^{(1-1/s)s}
\\
&&
\times
\sum_{i=1}^{\infty}
|Q^{x_0}_{2^{k_0+i}}|^{s(d-1)}
\int_{ Q^{x_0}_{2^{k_0+i+1}}}
 |b(x)-b_{Q^{x_0}_{2^{k_0}}}|^s dx
 \\
& &
~~~~~~~~~~~~~~~({\rm by ~H\ddot{o}lder's~inequality})
\\
& \leq &
C
\left(
|Q^{x_0}_{2^{k_0}}|^{-q}
w(Q^{x_0}_{2^{k_0}})\right)^{ds/[p(d-c)]}
\|h\|_{L^s}^s |Q^{x_0}_{2^{k_0}}|^{(1-1/s)s}
\\
&&
\times
\|b\|_{BMO}^s
\sum_{i=1}^{\infty}
|Q^{x_0}_{2^{k_0+i}}|^{s(d-1)+1}i^s
 \\
& &
~~~~~~~~~~~~~~~({\rm by  ~} (3.3))
\\
& \leq &
C\|b\|_{BMO}^s
\left(
|Q^{x_0}_{2^{k_0}}|^{-q}
w(Q^{x_0}_{2^{k_0}})\right)^{ds/[p(d-c)]}
\|h\|_{L^s}^s |Q^{x_0}_{2^{k_0}}|^{(1-1/s)s}
|Q^{x_0}_{2^{k_0}}|^{s(d-1)+1}
 \\
& &
~~~~~~~~~~~~~~~( {\rm
noticing }~d-1+1/s=-\varepsilon <0)
\\
& = & C\|b\|_{BMO}^s \left(
|Q^{x_0}_{2^{k_0}}|^{-1/s}w(Q^{x_0}_{2^{k_0}})^{1/p}
\right)^{ds/(d-c)}
 \|h \|^{s}_{L^{s}   }
 \\& &~~~~~~~~~~~~~
 ~~~~~~~~~~~({\rm noticing} ~d-c=q/p-1/s).
\end{eqnarray*}
Thus,
\begin{equation}
JJ_{s}\leq C\|b\|_{BMO}^s \left(
|Q^{x_0}_{2^{k_0}}|^{-1/s}w(Q^{x_0}_{2^{k_0}})^{1/p}
\right)^{ds/(d-c)}
 \|h \|^{s}_{L^{s}   }.
 \end{equation}
\par Combining this with the  estimate of $J_{s}$, and using (1.1), we have
\begin{eqnarray*}
A_{s} &\leq &
C\|b\|_{BMO}
\left(
|Q^{x_0}_{2^{m_0}}|^{-1/s}w(Q^{x_0}_{2^{m_0}})^{1/p}
\right)^{b/(d-c)}
 \|h \|^{}_{L^{s}   }.
\end{eqnarray*}

\par Then, Combining the above estimate of $A_{s}$ and $B_{s}$,
 noticing $c/d>0$ and $1-c/d> 0,$ we have
\begin{eqnarray*}
\Re_{s} (T_bh)\leq C\|b\|_{BMO}\| h  \|^{ }_{L^{s}   }|Q^{x_0}_{2^{m_0}}|^{-1/s}w(Q^{x_0}_{2^{m_0}})^{1/p}\leq C\|b\|_{BMO},
\end{eqnarray*}
by the definition of $h$.
Thus, (3.8) is proved under the conditions of Theorem 2.3.
\end{proof}
This concludes the proof of Theorem 2.3.

\begin{proof}[Proof of (3.9)  under the conditions of Theorem 2.7]

 Let $b$ be a $BMO$ function,  $h$ be a $(p,s,w)$-block with
 supp $h\subseteq Q $. We need to estimate $A_{\tilde{s}}$  and $B_{\tilde{s}}$.  Once
  \begin{equation}
  B_{\tilde{s}} \leq C  \|b\|_{BMO}|Q|^{1/\tilde{s}}w(Q)^{-1/p}.
   \end{equation}
and
 \begin{equation}
 A_{\tilde{s}}\leq
 C \|b\|_{BMO}
  \left(
|Q|^{-1/\tilde{s}}w(Q)^{1/p}
\right)^{b/(d-c)}
|Q|^{1/\tilde{s}}w(Q)^{-1/p}
 \end{equation}
are established, it is easy to get $\Re_{\tilde{s}} (T_bh)\leq C\|b\|_{BMO}$.

\par  Now, let us prove (3.14) and (3.15).
 Let us first prove  (3.14). Noticing $1<\tilde{s}<\infty$,
 we write
\begin{eqnarray*}
 \|T_bh\|^{\tilde{s}}_{L^{\tilde{s}}}
 =
 \left(\int _{Q^{x_0}_{2^{k_0+1}}}+\int _{(Q^{x_0}_{2^{k_0+1}})^c} \right)
|T_bh(x)|^{\tilde{s}}  dx
  = : \tilde{J}+\widetilde{JJ}.
\end{eqnarray*}

\par For $\tilde{J}$, we have by (3.5) and Minkowski inequality that
\begin{eqnarray*}
\tilde{J}^{1/\tilde{s}}
&\leq &
\left(\int_{Q^{x_0}_{2^{k_0+1}}}|T((b-b_{Q^{x_0}_{2^{k_0}}})h)(x)|^{\tilde{s}}dx\right)^{1/\tilde{s}}
 \\
& &+\left(\int_{Q^{x_0}_{2^{k_0+1}}}(|b_{Q^{x_0}_{2^{k_0}}}-b(x)||Th(x)|)^{\tilde{s}}dx\right)^{1/\tilde{s}}
 =: \tilde{J}_1^{1/\tilde{s}}+\tilde{J}_2^{1/\tilde{s}}.
 \end{eqnarray*}
For $\tilde{J}_1$, we have by the boundedness  of  $ T$ on $L^{\tilde{s}}$ that
\begin{eqnarray*}
\tilde{J}_1
 =
\int_{Q^{x_0}_{2^{k_0+1}}}|T((b-b_{Q^{x_0}_{2^{k_0}}})h)|^{\tilde{s}}
\leq
\int_{Q^{x_0}_{2^{k_0+1}}}|(b-b_{Q^{x_0}_{2^{k_0}}})h|^{ {\tilde{s}}}.
\end{eqnarray*}
It follows that, for $1<s<\infty$,
\begin{eqnarray*}
\tilde{J}_1
&\leq&
C
\left(
\int_{Q^{x_0}_{2^{k_0+1}}}|h|^{{\tilde{s}}\cdot(s/\tilde{s})}
\right)^{1/(s/\tilde{s})}
\left(
\int_{Q^{x_0}_{2^{k_0+1}}}|b-b_{Q^{x_0}_{2^{k_0}}}|^{{\tilde{s}}\cdot(s/\tilde{s})'}
\right)^{1/(s/\tilde{s})'}
\\
&&
~~~~~~~~~~~~~~~~({\rm by~H\ddot{o}lder~inequality~ for ~the~ index} ~s/\tilde{s})
\\
&\leq&
C \|b\|_{BMO}^{\tilde{s}}\|h\|_{L^{s}}^{\tilde{s}}|Q^{x_0}_{2^{k_0}}|^{1/(s/\tilde{s})'}
\\
&&~~~~~~~~~~~~({\rm by}~(3.3) ~{\rm since ~  }1\leq \tilde{s}\cdot(s/\tilde{s})' <\infty ~{\rm which ~follows~ from~  }s/(s+1)\leq \tilde{s}<s)
\\
&\leq &
C  \|b\|_{BMO}^{\tilde{s}}|Q^{x_0}_{2^{m_0}}|w(Q^{x_0}_{2^{m_0}})^{-\tilde{s}/p}
\\
&&
~~~~~~~~~~~~~~~~({\rm by~the ~definition~ of ~} h ),
 \end{eqnarray*}
for $s=\infty$, by (3.3) and the definition of $h$,
\begin{eqnarray*}
\tilde{J}_1
&\leq&
\int_{Q^{x_0}_{2^{k_0+1}}}|(b-b_{Q^{x_0}_{2^{k_0}}})h|^{ {\tilde{s}}}
\leq
\|h\|_{L^{\infty}}^{\tilde{s}}
\int_{Q^{x_0}_{2^{k_0+1}}}|(b-b_{Q^{x_0}_{2^{k_0}}})|^{ {\tilde{s}}}
\\
&\leq &
C  \|b\|_{BMO}^{\tilde{s}}|Q^{x_0}_{2^{m_0}}|w(Q^{x_0}_{2^{m_0}})^{-\tilde{s}/p}
.
\end{eqnarray*}
For $\tilde{J}_2$, we have that, for $s<\infty$,
\begin{eqnarray*}
\tilde{J}_2
&\leq&
C
\left(
\int_{Q^{x_0}_{2^{k_0+1}}}|Th|^{{\tilde{s}}\cdot(s/\tilde{s})}
\right)^{1/(s/\tilde{s})}
\left(
\int_{Q^{x_0}_{2^{k_0+1}}}|b-b_{Q^{x_0}_{2^{k_0}}}|^{{\tilde{s}}\cdot(s/\tilde{s})'}
\right)^{1/(s/\tilde{s})'}
\\
&&
~~~~~~~~~~~~~~~~({\rm by~H\ddot{o}lder~inequality~ for ~the~ index} ~s/\tilde{s})
\\
&\leq&
C
\left(
\int_{Q^{x_0}_{2^{k_0+1}}}|h|^{{\tilde{s}}\cdot(s/\tilde{s})}
\right)^{1/(s/\tilde{s})}
\left(
\int_{Q^{x_0}_{2^{k_0+1}}}|b-b_{Q^{x_0}_{2^{k_0}}}|^{{\tilde{s}}\cdot(s/\tilde{s})'}
\right)^{1/(s/\tilde{s})'}
\\
&&
~~~~~~~~~~~~~~~~({\rm by~the~boundedness ~of ~} T~ {\rm on }~L^{s})
\\
&\leq &
C  \|b\|_{BMO}^{\tilde{s}}|Q^{x_0}_{2^{m_0}}|w(Q^{x_0}_{2^{m_0}})^{-\tilde{s}/p}
\\
&&~~~~~~~~~~~~({\rm by}~(3.3) ~{\rm since ~  }1\leq \tilde{s}\cdot(s/\tilde{s})' <\infty, ~{\rm and ~the ~definition~ of ~} h  )
.
 \end{eqnarray*}
 for $s=\infty$,  by the boundedness of $T$  on $L^{\infty}$, (3.3) and the definition of $h$,
\begin{eqnarray*}
\tilde{J}_2
 =
\int_{Q^{x_0}_{2^{k_0+1}}}|(b-b_{Q^{x_0}_{2^{k_0}}})Th|^{\tilde{s}}
\leq
C  \|b\|_{BMO}^{\tilde{s}}|Q^{x_0}_{2^{m_0}}|w(Q^{x_0}_{2^{m_0}})^{-\tilde{s}/p}.
\end{eqnarray*}
\par For $\widetilde{JJ}$, we have by (3.5) and Minkowski inequality that
\begin{eqnarray*}
\widetilde{JJ}^{1/\tilde{s}}
&\leq &
\left(\int_{(Q^{x_0}_{2^{k_0+1}})^c}|T((b -b_{Q^{x_0}_{2^{k_0}}})h )(x)|^{\tilde{s}}dx\right)^{1/\tilde{s}}
\\
& & +\left(\int_{(Q^{x_0}_{2^{k_0+1}})^c}(|b_{Q^{x_0}_{2^{k_0}}}-b(x)||Th(x)|)^{\tilde{s}}dx \right)^{1/\tilde{s}}=:\widetilde{JJ}_1^{1/\tilde{s}}+\widetilde{JJ}_2^{1/\tilde{s}}.
 \end{eqnarray*}
For $\widetilde{JJ}_1$, we have
\begin{eqnarray*}
\widetilde{JJ}_1
 &\leq &
\|(b-b_{Q^{x_0}_{2^{k_0}}})h\|_{L^1}^{\tilde{s}}
\int_{(Q^{x_0}_{2^{k_0+1}})^c}|x-x_0|^{-n\tilde{s}}dx
\\
&&~~~~~~~~~~~~~~~~~~~~~~~~~~~~~~~~~~~({\rm by}~(1.2))
\\
&\leq &
C \|(b-b_{Q^{x_0}_{2^{k_0}}})h\|_{L^1}^{\tilde{s}}
|Q^{x_0}_{2^{k_0}}|^{1-\tilde{s}}
\\
&&~~~~~~~( {\rm noticing}~ 1<\tilde{s}<\infty)
\\
&\leq &
C\|b\|_{BMO}^{\tilde{s}} |Q^{x_0}_{2^{m_0}}|w(Q^{x_0}_{2^{m_0}})^{-\tilde{s}/p}
\\
&&
~~~~~~~~~~~~~~~~({\rm by  ~H\ddot{o}lder~inequality,~(3.3)~and ~the ~definition~ of ~} h ).
 \end{eqnarray*}
 For $\widetilde{JJ}_2$, we have
\begin{eqnarray*}
\widetilde{JJ}_2
&\leq &
C \|h\|_{L^1}^{\tilde{s}}
\sum_{i=1}^{\infty}
\int_{ Q^{x_0}_{2^{k_0+i+1}}\backslash Q^{x_0}_{2^{k_0+i}}}|b(x)-b_{Q^{x_0}_{2^{k_0}}}|^{\tilde{s}}|x-x_0|^{-n\tilde{s}}dx
\\
&&~~~~~~~~~~~~~~~~~~~~~~~~~~~~~~~~~~~({\rm by}~(1.2))
\\
&\leq &
C \|h\|_{L^1}^{\tilde{s}}
\sum_{i=1}^{\infty}
2^{-n\tilde{s}i}|Q^{x_0}_{2^{m_0}}|^{-\tilde{s}}
\int_{ Q^{x_0}_{2^{k_0+i+1}}}|b(x)-b_{Q^{x_0}_{2^{k_0}}}|^{\tilde{s}}dx
\\
&\leq &
C \|b\|_{BMO}^{\tilde{s}}
\|h\|_{L^{s}}^{\tilde{s}}|Q^{x_0}_{2^{m_0}}|^{\tilde{s}/s'}|Q^{x_0}_{2^{m_0}}|^{-\tilde{s}}|Q^{x_0}_{2^{m_0}}|
\sum_{i=1}^{\infty} 2^{-n(\tilde{s}-1)i}i^{\tilde{s}}
\\
&&
~~~~~~~~~~~~~~~~~~~~~~~({\rm by~H\ddot{o}lder~inequality~ and ~~(3.3)})
\\
&\leq &
C\|b\|_{BMO}^{\tilde{s}} |Q^{x_0}_{2^{m_0}}|w(Q^{x_0}_{2^{m_0}})^{-\tilde{s}/p}
\\
&&
~~~~~~~~~~~~~~~~({\rm by  ~the ~definition~ of ~} h ).
 \end{eqnarray*}
Combining the above estimates, we get (3.14).

 Next, let us prove (3.15) for a $(p,s,w)$-block $h$ with
 supp $h\subseteq Q = Q^{x_0}_{2^{m_0}}$. We need to estimate $A_{\tilde{s}}$,   as  $A_{{s}}$ in (3.11), we need to estimate $J_{\tilde{s}}$ and $JJ_{\tilde{s}}$.

\par For   $JJ_{\tilde{s}}$,  noticing that a $(p,s,w)$-block is also a $(p,\tilde{s},w)$-block for $\tilde{s}<s\leq \infty$,  by the estimate (3.13) for $JJ_{ s}$,
 we have
\begin{eqnarray*}
JJ_{\tilde{s}}
\leq
  C \|b\|_{BMO}^{\tilde{s}} \left(
|Q^{x_0}_{2^{k_0}}|^{-1/\tilde{s}}w(Q^{x_0}_{2^{k_0}})^{1/p}
\right)^{d\tilde{s}/(d-c)}
 \|h \|^{\tilde{s}}_{L^{\tilde{s}}   },
\end{eqnarray*}
and then, using H\"{o}lder inequality and the definition of $h$, we get
\begin{eqnarray*}
JJ_{\tilde{s}}
 \leq
  C \|b\|_{BMO}^{\tilde{s}}
  \left(
|Q^{x_0}_{2^{k_0}}|^{-1/\tilde{s}}w(Q^{x_0}_{2^{k_0}})^{1/p}
\right)^{d\tilde{s}/(d-c)}
|Q|w(Q)^{-\tilde{s}/p}.
\end{eqnarray*}

\par For $J_{\tilde{s}}$, by $\frac{rp}{r-1}<  \tilde{s}$, and
 $w\in RH_r$,  we then have
 \begin{equation}\label{5.2}
|Q^{x_0}_{|x-x_0|}|^{-1/\tilde{s}}w(Q^{x_0}_{|x-x_0|})^{1/p}
\leq
|Q^{x_0}_{R}|^{-1/\tilde{s}}w(Q^{x_0}_{R})^{1/p}
 \end{equation}
for $|x-x_0|\leq R$, (see also (6.4) in  \cite{L}).

 \par Thus,  using (3.16) (take $R=2^{k_0+1}$),  (3.14) and (1.1), we have
\begin{eqnarray*}
J_{\tilde{s}}
&=&
 \int _{Q^{x_0}_{2^{k_0+1}}}
|T_b h(x)|^{\tilde{s}} \left( |Q^{x_0}_{|x-x_0|}|^{-1/\tilde{s}}w(Q^{x_0}_{|x-x_0|})^{1/p}\right)^{d\tilde{s}/(d-c)} dx
\\
& \leq &
\left(
|Q^{x_0}_{2^{k_0+1}}|^{-1/\tilde{s}}w(Q^{x_0}_{2^{k_0+1}})^{1/p}
\right)^{d\tilde{s}/(d-c)}
 \|T_b h \|^{\tilde{s}}_{L^{\tilde{s}}   }
 \\
& \leq & C \|b\|_{BMO}^{\tilde{s}}
  \left(
|Q^{x_0}_{2^{k_0}}|^{-1/\tilde{s}}w(Q^{x_0}_{2^{k_0}})^{1/p}
\right)^{d\tilde{s}/(d-c)}
|Q|w(Q)^{-\tilde{s}/p}.
%\\ && ~~~By~Lemma ~3.6.
\end{eqnarray*}

Combining  the  estimate of $JJ_{\tilde{s}}$ with $J_{\tilde{s}}$, and using (1.1),
  we have
  \begin{eqnarray*}
A _{\tilde{s}}&\leq &
C\|b\|_{BMO}
\left(
|Q^{x_0}_{2^{m_0}}|^{-1/\tilde{s}}w(Q^{x_0}_{2^{m_0}})^{1/p}
\right)^{d/(d-c)}
|Q|^{1/\tilde{s}}w(Q)^{-1/p},
\end{eqnarray*}
 that is (3.15), noticing  $ Q= Q^{x_0}_{2^{m_0}} $.
 Thus, (3.9) is proved under the conditions of Theorem 2.7.
\end{proof}
 This concludes  the proof of Theorem 2.7.
\begin{proof}[Proof of Theorem 2.9]
Each  $T^{1}$  associated  with commutators $T_b^{1}$  obviously satisfy (2.7) (or see \cite {SW})), (1.2) follows.
  It is known that each  $T_b^{1}$ is bounded on  $L^s$ with $1<s<\infty$.
  Each  $T^{1}$ is obviously linear.

Then, the theorem follows from   Theorem 2.1(i) and 2.3(i).
\end{proof}

\begin{proof}[Proof of Theorem 2.11]
$M$ satisfies (1.2), (see \cite{L}). It is known that  $M_b$ is bounded on $L^s$ with $1<s< \infty$.

 \par Let $w\in A_{q,r}\cap P$  with $ 0<q <p  $  and  $ 1<r <\infty  $.
 Let $f(x)=\sum_{l}\lambda_l a_l\in  BH^{p,s}_w$, where $\{a_l\}$ is a sequence   $(p,s,w)$-blocks and $\{\lambda_l\}$ is a sequence  numbers with $ \sum_{l}|\lambda_l|^{\bar{p}} <\infty$.
By Fact 1.6, we see that $f(x)=\sum_{l}\lambda_l a_l$ converges   $w$-a.e., for  $  1 < s\leq \infty  $ and  $  rp/(r-1) \leq s$, by Lemma 3.4, it holds   a.e., it follows,
$|f(x)|\leq \sum_{l}|\lambda_l| |a_l(x)|,$ a.e..
Each $M_b a_l$ is well defined since $a_l\in L^s$ with $1< s<\infty$.  Then,
we have by   Minkowski inequality   that
 $M_b f(x)\leq \sum_{l}|\lambda_l| M_b a_l(x),$ a.e.,  by using Lemma 3.4 again,  it holds $w$-a.e.. That
is $M_b $ satisfies (1.3).

\par Then, the theorem follows from Theorem 2.1(ii) and Theorem 2.3(ii).
\end{proof}

\begin{proof}[Proof of Theorem 2.13  ]
 $\tilde{T}^{\Omega, *}$ obviously  satisfies (2.7), (1.2) follows. It is known that the commutator  $\tilde{T}_b^{\Omega, *}$ is bounded on $L^s$  for all $1<s<\infty$.

\par The  truncated operator  associated  with      $\tilde{T}^{\Omega, *}$  obviously  satisfy (2.7), (1.2) follows.
It  obviously is  linear and the commutator of this truncated operator is also  bounded on $L^s$  for all $1<s<\infty$. By Theorem 2.1(i), for this commutators (2.1) holds $w$-a.e., it follows that $\tilde{T}_b^{\Omega, *}$ satisfies (1.3).

 Then,   the result of (i) follows from  Theorem 2.1(ii), and the result of (ii) follows  from Theorem 2.3(ii).
\end{proof}

\begin{proof}[Proof of Theorem   2.15]
 Each $T^2$ obviously  satisfy (2.7), (1.2) follows. It is known that they are also bounded on $L^s$  for all $1<s<\infty$.

\par Each  truncated operator  associated  with     $ H^*,R^{j,*}, T^{\Omega, *},T^{F,*},T^{CZ,*}$ and $ T_{}^{P,*}$  obviously  satisfy (2.7)  and the constant $C$ in (2.7) is independent of the  truncated number $\varepsilon$, (1.2) follows. $C^{\xi} ,  B^{(n-1)/2,R }$ and $C^{P,d,1}$ (or $C^{P,d,n}$) satisfy (2.7) and the constant $C$ in (2.7) is independent of $\xi,R$ and $P$, (1.2) follows, where $C^{\xi} $ and $  B^{(n-1)/2,R }$  as before, and $ C^{P,d,n} f(x)=\int_{{\bf R}^n} e^{  i P(x- y)} k(x-y)  f(y)dy$.
These operators  obviously are  linear and are also  bounded on $L^s$  for all $1<s<\infty$. By Theorem 2.5(i), for the commutators of these operators, (2.1) holds $w$-a.e., it follows that each $T_b^2$ satisfies (1.3).

Then,  the result of (i) follows from  Theorem 2.5(ii), and the result of (ii) follows  from Theorem 2.7(ii).
\end{proof}

\begin{proof}[Proof of Theorem   2.18]
   The result of (i) follows from  Theorem 2.5(i), and the result of (ii) follows from Theorem 2.7(i).
\end{proof}

%    Bibliographies can be prepared with BibTeX using amsplain,
%    amsalpha, or (for "historical" overviews) natbib style.
\bibliographystyle{amsplain}
%    Insert the bibliography data here.

\end{document}